\newcommand*{\mailto}[1]{\href{mailto:#1}{\nolinkurl{#1}}}
\newcommand{\arxiv}[1]{\href{http://arxiv.org/abs/#1}{arXiv:#1}}
\def\theequation{\@arabic\c@equation}
\newcommand{\bbN}{{\mathbb{N}}}
\newcommand{\bbR}{{\mathbb{R}}}
\newcommand{\bbC}{{\mathbb{C}}}
\newcommand{\bbS}{{\mathbb{S}}}
\newcommand{\cX}{{\mathcal X}}
\newcommand{\cY}{{\mathcal Y}}
\newcommand{\dott}{\,\cdot\,}
\newcommand{\no}{\nonumber}
\newcommand{\lb}{\label}
\newcommand{\f}{\frac}
\newcommand{\ol}{\overline}
\newcommand{\bs}{\backslash} 
\newcommand{\wti}{\widetilde}
\newcommand{\eps}{\varepsilon}
\newcommand{\al}{\alpha}
\renewcommand{\b}{\beta}
\newcommand{\g}{\gamma}
\newcommand{\loc}{\text{\rm{loc}}}
\newcommand{\dom}{\text{\rm{dom}}}
\newcommand{\supp}{\text{\rm{supp}}}
\newcommand{\bi}{\bibitem}
\newcommand{\hatt}{\widehat}
\renewcommand{\Re}{\text{\rm Re}}
\renewcommand{\ln}{\text{\rm ln}}
\renewcommand{\dot}{\overset{\textbf{\Large.}}}
\DeclareMathOperator{\Div}{div}
\numberwithin{equation}{section}
\newtheorem{theorem}{Theorem}[section]
\newtheorem{lemma}[theorem]{Lemma}
\newtheorem{definition}[theorem]{Definition}
\newtheorem{hypothesis}[theorem]{Hypothesis}
\theoremstyle{remark}
\newtheorem{remark}[theorem]{Remark}
\begin{document}

\title[Essential Self-Adjointness of Sturm--Liouville Operators]{On Essential Self-Adjointness of Singular \\ Sturm--Liouville Operators}

\author[S.\ B.\ Allan]{S. Blake Allan}
\address{Department of Mathematics, 
Baylor University, Sid Richardson Bldg., 1410 S.\,4th Street,
Waco, TX 76706, USA}
\email{\mailto{Blake\_Allan1@baylor.edu}}

\author[F.\ Gesztesy]{Fritz Gesztesy}
\address{Department of Mathematics,
Baylor University, Sid Richardson Bldg., 1410 S.\,4th Street,
Waco, TX 76706, USA}
\email{\mailto{Fritz\_Gesztesy@baylor.edu}}
\urladdr{\url{http://www.baylor.edu/math/index.php?id=935340}}

\author[A.\ L.\ Sakhnovich]{Alexander Sakhnovich}
\address{Faculty of Mathematics\\ University of Vienna\\
Oskar-Morgenstern-Platz 1\\ 1090 Wien\\ Austria}
\email{\mailto{oleksandr.sakhnovych@univie.ac.at}}
\urladdr{\url{http://www.mat.univie.ac.at/~sakhnov/}} 

\date{\today}
\thanks{To appear in {\it Revista de la Unión Matemática Argentina}  (RevUMA).}
\@namedef{subjclassname@2020}{\textup{2020} Mathematics Subject Classification}
\subjclass[2020]{Primary 34B20, 34C10, 47E05; Secondary 34B24, 34L40.} 
\keywords{Limit point/limit circle criteria, essential self-adjointness, Sturm--Liouville operators.}

\begin{abstract}
Considering singular Sturm--Liouville differential expressions of the type 
\[
\tau_{\alpha} = -(d/dx)x^{\alpha}(d/dx) + q(x), \quad x \in (0,b),  \; \alpha \in \bbR, 
\]
we employ some Sturm comparison-type results in the spirit of Kurss to derive criteria for $\tau_{\alpha}$ to be in the limit point and limit circle case at $x=0$.
More precisely, if $\alpha \in \bbR$ and for $0 < x$ sufficiently small, 
\[
q(x) \geq [(3/4)-(\alpha/2)]x^{\alpha-2},
\]
or, if $\alpha\in (-\infty,2)$ and there exist $N\in\bbN$, and $\varepsilon>0$ such 
that for $0<x$ sufficiently small, 
\begin{align*}
&q(x)\geq[(3/4)-(\alpha/2)]x^{\alpha-2} - (1/2) (2 - \alpha) x^{\alpha-2} 
\sum_{j=1}^{N}\prod_{\ell=1}^{j}[\ln_{\ell}(x)]^{-1}     \\
&\quad\quad\quad +[(3/4)+\varepsilon] x^{\alpha-2}[\ln_{1}(x)]^{-2}. 
\end{align*}
then $\tau_{\alpha}$ is nonoscillatory and in the limit point case at $x=0$. Here iterated logarithms for $0 < x$ sufficiently small are of the form, 
\[
\ln_1(x) = |\ln(x)| = \ln(1/x), \quad \ln_{j+1}(x) = \ln(\ln_j(x)), \quad j \in \bbN.
\]
Analogous results are derived for $\tau_{\alpha}$ to be in the limit circle case at $x=0$. 

We also discuss a multi-dimensional application to partial differential expressions of the type   
\[
- \Div |x|^{\alpha} \nabla + q(|x|), \quad \alpha \in \bbR, \; x \in B_n(0;R) \backslash\{0\},
\]
with $B_n(0;R)$ the open ball in $\bbR^n$, $n\in \bbN$, $n \geq 2$, centered at $x=0$ of radius $R \in (0, \infty)$. 
\end{abstract}

\maketitle


{\scriptsize{\tableofcontents}}

\section{Introduction} \lb{s1}

In a nutshell, we are interested in deriving limit point and limit circle criteria for singular differential expressions of the type 
\begin{equation}
\tau_{\alpha} = -(d/dx)x^{\alpha}(d/dx) + q(x), \quad x \in (0,b),  \; \alpha \in \bbR.      \lb{1.1}
\end{equation}
The principal results we prove in Theorems \ref{t3.1} and \ref{t3.3} on the basis of Sturm comparison-type results initiated by Kurss are the following: Suppose $\alpha \in \bbR$ and for $0 < x$ sufficiently small, 
\begin{equation}
q(x) \geq [(3/4)-(\alpha/2)]x^{\alpha-2},     \lb{1.2} 
\end{equation}
or, in the context of logarithmic refinements of \eqref{1.2}, assume that $\alpha\in (-\infty,2)$ and there exist 
$N\in\bbN$ and $\varepsilon>0$ such that for $0<x$ sufficiently small,  
\begin{align}
\begin{split} 
&q(x)\geq[(3/4)-(\alpha/2)]x^{\alpha-2} - (1/2) (2 - \alpha) x^{\alpha-2} 
\sum_{j=1}^{N}\prod_{\ell=1}^{j}[\ln_{\ell}(x)]^{-1}  \lb{1.3}   \\
&\quad\quad\quad +[(3/4)+\varepsilon] x^{\alpha-2}[\ln_{1}(x)]^{-2}. 
\end{split} 
\end{align}
(For the definition of the iterated logarithms $\ln_{\ell}(\dott)$, $\ell \in \bbN$, we refer to \eqref{3.3}.)
Then, in either situation \eqref{1.2} and \eqref{1.3}, $\tau_{\alpha}$ is nonoscillatory and in the limit point case at $x=0$.

Similarly, if $\alpha \in (-\infty,2)$ and there exists $\varepsilon \in (0,1)$ $($depending on $\alpha)$ such that 
for $0<x$ sufficiently small $($depending on $\varepsilon)$, 
\begin{equation}
q(x)\leq[(3/4)-(\alpha/2) - \varepsilon]x^{\alpha-2},     \lb{1.4}
\end{equation}
or, if $\alpha\in (-\infty,2)$ and there exist $N\in\bbN$ and $\varepsilon\in(0,1)$ $($depending on 
$\alpha$ and $N$$)$, such that 
for $0<x$ sufficiently small $($depending on $\varepsilon$ and $N$$)$, 
\begin{align}
q(x)&\leq[(3/4)-(\alpha/2)]x^{\alpha-2} 
- (1/2) (2 - \alpha) x^{\alpha-2}\sum_{j=1}^{N}\prod_{\ell=1}^{j}[\ln_{\ell}(x)]^{-1}\no\\
&\quad - [\varepsilon(2 - \alpha)/2] x^{\alpha-2}\prod_{k=1}^{N}[\ln_{k}(x)]^{-1}.
	\lb{1.5}
\end{align}
Then, in either situation \eqref{1.4} and \eqref{1.5}, $\tau_{\alpha}$ is in the limit circle case at $x=0$.

We note that the amount of literature on limit point/limit circle criteria for Sturm--Liouville operators is so immense that there is no possibility to account for it in this short paper. The reader finds very thorough discussions of this circle of ideas in  
\cite[Ch.~XIII.6, Sect.~XIII.10.D]{DS88}, \cite[Ch.~4.6]{GZ20}, \cite[Sect.~23.6]{Na68}, \cite[App.~to Sect.~X.1]{RS75}, \cite[Sect.~13.4]{We03}, \cite[Ch.~7]{Ze05}, and the extensive literature cited therein. One of the driving motivations for  writing this paper was to emphasize the simplicity of proofs of Theorems \ref{t3.1} and \ref{t3.3} (essentially, they are reduced to certain computations), given the Sturm comparison results, Theorems \ref{t2.8} and \ref{t2.9}, in the spirit of Kurss. 

We briefly turn to the content of each section next: Section \ref{s2} provides the necessary background on minimal and maximal operators associated with general three-coefficient Sturm--Liouville differential expressions, recalls Weyl's limit point/limit circle classification and some oscillation theory, as well as Sturm comparison results, quoting some extensions of the classical work by Kurss \cite{Ku67}.  Section \ref{s3} contains our principal results, Theorems \ref{t3.1} and \ref{t3.3}, as summarized in \eqref{1.2}--\eqref{1.5}. Section \ref{s4} considers an elementary multi-dimensional application to partial differential expressions of the type   
\[
- \Div |x|^{\alpha} \nabla + q(|x|), \quad \alpha \in \bbR, \; x \in B_n(0;R) \backslash\{0\},
\]
with $B_n(0;R)$ the open ball in $\bbR^n$, $n\in \bbN$, $n \geq 2$, centered at $x=0$ of radius $R \in (0, \infty)$. 
Finally, Appendix \ref{sA} contains some more involved computations needed in the proofs of Theorems \ref{t3.1} and \ref{t3.3}.

\section{Some Background on Sturm--Liouville Operators} \lb{s2}

In this section we briefly recall the necessary background on maximal and minimal operators corresponding to three-coefficient Sturm--Liouville differential expressions, introduce the notion of deficiency indices for symmetric Sturm--Liouville operators, discuss Weyl's limit point/limit circle dichotomy, recall some oscillation theory, and discuss an extension of Sturm comparison results, applicable to the limit point/limit circle case, following Kurss \cite{Ku67}. Standard references for much of this material are, for instance, \cite[Ch.~6]{{BHS20}}, 
\cite[Chs.~8, 9]{CL85}, \cite[Sects~13.6, 13.9, 13.0]{DS88}, \cite[Ch.~4]{GZ20}, \cite[Ch.~III]{JR76}, 
\cite[Ch.~V]{Na68}, \cite{NZ92}, \cite[Ch.~6]{Pe88}, \cite[Ch.~9]{Te14}, \cite[Sect.~8.3]{We80}, \cite[Ch.~13]{We03}, \cite[Chs.~4, 6--8]{Ze05}.

We start with the basic set of assumptions throughout this section:

\begin{hypothesis} \lb{h2.1}
Let $(a,b) \subseteq \bbR$ and suppose that $p,q,r$ are $($Lebesgue\,$)$ measurable functions on $(a,b)$ 
such that the following items $(i)$--$(iii)$ hold: \\[1mm] 
$(i)$ \hspace*{1.1mm} $r>0$ a.e.~on $(a,b)$, $r \in L^1_{loc}((a,b); dx)$. \\[1mm] 
$(ii)$ \hspace*{.1mm} $p>0$ a.e.~on $(a,b)$, $1/p \in L^1_{loc}((a,b); dx)$. \\[1mm] 
$(iii)$ $q$ is real-valued a.e.~on $(a,b)$, $q \in L^1_{loc}((a,b); dx)$. 
\end{hypothesis}

Given Hypothesis \ref{h2.1}, we briefly study Sturm--Liouville differential expressions $\tau$ of the type,
\begin{equation}
\tau=\f{1}{r(x)}\left[-\f{d}{dx}p(x)\f{d}{dx} + q(x)\right] \, \text{ for a.e.~$x\in(a,b) \subseteq \bbR$}   \lb{2.1}
\end{equation} 

Given $\tau$ as in \eqref{2.1}, the corresponding {\it maximal operator} $T_{max}$ in $L^2((a,b); r\,dx)$ associated with $\tau$ is defined by
\begin{align}
\begin{split}
&T_{max} f = \tau f,\\
& f \in \dom(T_{max})=\big\{g\in L^2((a,b); r\,dx)\, \big| \,g,g^{[1]} \in AC_{loc}((a,b));  \\ 
& \hspace*{6.3cm}  \tau g\in L^2((a,b); r\,dx)\big\},
\end{split}
	\lb{2.2}
\end{align}
with $g^{[1]} = p g'$ denoting the first quasi-derivative of $g$. 
The {\it preminimal operator} $\dot T_{min} $ in $L^2((a,b); r\,dx)$ associated with $\tau$ is defined by 
\begin{align}
\begin{split}
&\dot T_{min}  f = \tau f,
\\
&f \in \dom \big(\dot T_{min}\big)=\big\{g\in L^2((a,b); r\,dx) \, \big| \, g,g^{[1]}\in AC_{loc}((a,b));
\\
&\hspace*{3.25cm} \supp \, (g)\subset(a,b) \text{ is compact; } \tau g\in L^2((a,b); r\,dx)\big\}.
\end{split}
	\lb{2.3}
\end{align}
One can prove that $\dot T_{min} $ is closable and then define the {\it minimal operator} $T_{min}$ in 
$L^2((a,b); rdx)$ as the closure of $\dot T_{min}$, 
\begin{equation}
T_{min} = \ol{\dot T_{min}}.
	\lb{2.4}
\end{equation}

The following result recalls Weyl's celebrated alternative:

\begin{theorem} \lb{t2.2} ${}$ \\
Assume Hypothesis \ref{h2.1}. Then the following alternative holds: \\[1mm] 
$(i)$ For every $z\in\bbC$, all solutions $u$ of $(\tau-z)u=0$ are in $L^2((a,b); r\,dx)$ near $b$ 
$($resp., near $a$$)$. \\[1mm] 
$(ii)$  For every $z\in\bbC$, there exists at least one solution $u$ of $(\tau-z)u=0$ which is not in $L^2((a,b); r\,dx)$ near $b$ $($resp., near $a$$)$. In this case, for each $z\in\bbC\bs\bbR$, there exists precisely one solution $u_b$ $($resp., $u_a$$)$ of $(\tau-z)u=0$ $($up to constant multiples\,$)$ which lies in $L^2((a,b); rdx)$ near $b$ $($resp., near $a$$)$. 
\end{theorem}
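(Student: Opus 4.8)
The plan is to prove the statement near the endpoint $b$; the case near $a$ follows by the identical argument after reflection. The proof rests on two ingredients: a variation-of-constants estimate that renders the $L^2$ classification independent of the spectral parameter $z$, and Weyl's nested-disk construction that pins down the dimension of the space of square-integrable solutions.

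First I would establish the $z$-independence. Fix a base point $c \in (a,b)$ and a fundamental system $\phi,\theta$ of $(\tau - z_0)u = 0$ for some fixed $z_0 \in \bbC$, normalized so that the Lagrange bracket $[\phi,\theta]\equiv 1$. Writing $(\tau-z)u=0$ as $(\tau-z_0)u = (z-z_0)u$ and applying variation of constants, any solution $u$ of $(\tau-z)u=0$ satisfies the Volterra integral equation
\[
u(x) = c_1\phi(x) + c_2\theta(x) + (z-z_0)\int_c^x [\theta(x)\phi(t) - \phi(x)\theta(t)]\, r(t)\, u(t)\, dt,
\]
where the weight $r$ enters because $\tau$ carries the factor $1/r$. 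If both $\phi$ and $\theta$ lie in $L^2((c,b); r\,dx)$, then a Cauchy--Schwarz estimate on the kernel together with a Gronwall-type argument yields $u \in L^2((c,b); r\,dx)$ as well. Consequently the property ``all solutions of $(\tau-z)u=0$ are square-integrable near $b$'' holds for every $z \in \bbC$ as soon as it holds for one value of $z$. This is precisely the split between $(i)$ and $(ii)$, it proves the uniform ``for every $z$'' clause in both cases, and it shows that the dimension of the solution space lying in $L^2$ near $b$ is independent of $z$ and equals either $2$ (case $(i)$) or is strictly less than $2$ (case $(ii)$).

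It remains to treat the second assertion of $(ii)$. Fix $z$ with $\Im(z) \neq 0$ and a fundamental system $\phi(\dott,z),\theta(\dott,z)$ with initial data at $c$. For each $\beta \in (c,b)$, impose a real self-adjoint boundary condition at $\beta$ on the solution $u_m = \theta + m\phi$; the values of $m \in \bbC$ for which $u_m$ satisfies this condition trace out a circle $C_\beta$ bounding a disk $D_\beta$. The decisive tool is the Lagrange/Green identity applied to a solution $u$ of $(\tau-z)u=0$, namely
\[
2i\,\Im(z)\int_c^\beta \abs{u(t)}^2 r(t)\, dt = [u,\overline{u}](\beta) - [u,\overline{u}](c),
\]
which, upon substituting $u=u_m$, characterizes $D_\beta$ by an inequality of the form $\int_c^\beta \abs{u_m}^2 r\,dt \leq \Im(m)/\Im(z)$ and exhibits the disks as nested, $D_{\beta'} \subseteq D_\beta$ for $c < \beta < \beta' < b$. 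The radius of $C_\beta$ equals $\big(2\abs{\Im(z)}\int_c^\beta \abs{\phi}^2 r\, dt\big)^{-1}$.

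The nested compact disks $D_\beta$ shrink as $\beta \uparrow b$ to a nonempty limiting set $D_b$, and every $m \in D_b$ yields $u_m = \theta + m\phi \in L^2((c,b); r\,dx)$, so at least one square-integrable solution always exists for non-real $z$ (giving dimension $\geq 1$). In case $(ii)$ the classification forces $\phi \notin L^2$ near $b$, whence the radius tends to $0$ and $D_b = \{m_b\}$ reduces to a single point; the solution $u_b = \theta + m_b\phi$ is then the unique square-integrable solution up to constant multiples, as claimed. I expect the nested-disk step to be the main obstacle: verifying nestedness and computing the radius require careful bookkeeping of the boundary form and the correct use of the sign of $\Im(z)$, and one must confirm that the limit point/limit circle alternative produced by the disk construction coincides with the solution-space dimension identified in the first part.
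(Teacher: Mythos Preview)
The paper does not prove Theorem~\ref{t2.2}; it is stated there as a background result (``Weyl's celebrated alternative'') with proofs delegated to the standard references listed at the beginning of Section~\ref{s2} (e.g., \cite{CL85}, \cite{Te14}, \cite{We87}, \cite{Ze05}). Your argument is correct and is precisely the classical proof found in those sources: variation of constants plus a Gronwall estimate for the $z$-independence of the $L^2$ classification, followed by Weyl's nested-disk construction to produce the unique (up to multiples) square-integrable solution in the limit point case.

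One small point worth tightening: in the last paragraph you write that ``in case $(ii)$ the classification forces $\phi\notin L^2$ near $b$, whence the radius tends to $0$.'' The logic actually runs the other way. The nested compact disks $D_\beta$ shrink either to a single point or to a disk of positive radius. If the limit has positive radius, then the radius formula $\big(2|\Im(z)|\int_c^\beta|\phi|^2 r\,dt\big)^{-1}$ shows $\phi\in L^2$ near $b$, and two distinct boundary points $m_1\neq m_2$ of $D_b$ give two linearly independent $L^2$ solutions $u_{m_1},u_{m_2}$, forcing case~$(i)$. By contrapositive, case~$(ii)$ implies the disks shrink to a point, which yields the uniqueness of $u_b$. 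With this adjustment your proof is complete.
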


This naturally leads to the notion that $\tau$ is in the limit point or limit circle case at an interval endpoint as follows:

\begin{definition} \lb{d2.3} Assume Hypothesis \ref{h2.1}. \\[1mm]  
In case $(i)$ in Theorem \ref{t2.2}, $\tau$ is said to be in the \textit{limit circle case} at $b$ $($resp., at $a$$)$. 
\\[1mm] 
In case $(ii)$ in Theorem \ref{t2.2}, $\tau$ is said to be in the \textit{limit point case} at $b$ $($resp., at $a$$)$. 
\end{definition}

The deficiency indices of $T_{min}$ are then given by 
\begin{align}
n_\pm(T_{min}) &= \dim(\ker(T_{max} \mp i I))    \no\\
\begin{split}
& = \begin{cases}
2 & \text{if $\tau$ is in the limit circle case at $a$ and $b$,}\\
1 & \text{if $\tau$ is in the limit circle case at $a$} \\
& \text{and in the limit point case at $b$, or vice versa,}\\
0 & \text{if $\tau$ is in the limit point case at $a$ and $b$}. 
\end{cases}
\end{split}
	\lb{2.5}
\end{align}
In particular, $T_{min} = T_{max}$ is self-adjoint \big(i.e., $\dot T_{min}$ is essentially self-adjoint\big) 
if and only if $\tau$ is in the limit point case at $a$ and $b$, underscoring the special role played by limit point endpoints (as opposed to a limit circle endpoint that requires a boundary condition in connection with self-adjointness issues of $T_{min}$).  

We continue with a few remarks on Sturm's oscillation theory (see, e.g., \cite[Theorem~7.4.4]{GZ20}, \cite[Sect.~14]{We87}, and the detailed list of references cited therein): 

\begin{definition} \lb{d2.4}
Assume Hypothesis \ref{h2.1}. \\[1mm] 
$(i)$ Fix $c\in (a,b)$ and $\lambda\in\bbR$. Then $\tau - \lambda$ is
called {\it nonoscillatory} at $a$ $($resp., $b)$, 
if there exists a real-valued solution $u(\lambda,\dott)$ of 
$\tau u = \lambda u$ that has finitely many
zeros in $(a,c)$ $($resp., $(c,b))$. Otherwise, $\tau - \lambda$ is called {\it oscillatory}
at $a$ $($resp., $b)$. \\[1mm] 
$(ii)$ Let $\lambda_0 \in \bbR$. Then $T_{min}$ is called bounded from below by $\lambda_0$, 
and one writes $T_{min} \geq \lambda_0 I$, if 
\begin{equation} 
(u, [T_{min} - \lambda_0 I]u)_{L^2((a,b);r\,dx)}\geq 0, \quad u \in \dom(T_{min}).
	\lb{2.6}
\end{equation}
\end{definition}

\begin{remark} \lb{r2.5}
By Sturm's separation theorem, $\tau - \lambda$, $\lambda\in\bbR$, is nonoscillatory at $a$ (resp., at $b$) 
if and only if every real-valued solution $u(\lambda,\dott)$ of $\tau u = \lambda u$ has finitely many zeros in 
$(a,c)$ (resp., $(c,b)$).
\hfill $\diamond$
\end{remark}

The following is a key result relating the notions of boundedness from below and nonoscillation. 

\begin{theorem} \lb{t2.6} 
Assume Hypothesis \ref{h2.1}. Then the following items $(i)$ and $(ii)$ are
equivalent: \\[1mm] 
$(i)$ $T_{min}$ $($and hence any symmetric extension of $T_{min})$
is bounded from below. \\[1mm] 
$(ii)$ There exists a $\nu_0\in\bbR$ such that for all $\lambda < \nu_0$, $\tau - \lambda$ is
nonoscillatory at $a$ and $b$. 
\end{theorem}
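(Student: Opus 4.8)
The plan is to establish the two implications separately, using as a common tool the sesquilinear form
\[
\mathfrak{t}[f] = \int_a^b \big[p |f'|^2 + q |f|^2\big]\, dx,
\]
naturally attached to $\tau$, together with a Picone-type (ground-state) factorization and a zero-gluing comparison argument. A single integration by parts shows $\mathfrak{t}[f] = (f, T_{min} f)_{L^2((a,b); r\,dx)}$ for $f \in \dom(\dot T_{min})$, the boundary terms vanishing since $\supp(f)$ is compact in $(a,b)$; hence the condition $T_{min} \geq \lambda_0 I$ is equivalent, on the core $\dom(\dot T_{min})$, to $\mathfrak{t}[f] \geq \lambda_0 (f,f)_{L^2((a,b); r\,dx)}$.

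For $(i) \Rightarrow (ii)$ I would argue by contraposition and set $\nu_0 = \lambda_0$. If $\tau - \lambda$ is oscillatory at, say, $b$ for some $\lambda < \lambda_0$, pick two consecutive zeros $c_1 < c_2$ of a real solution $u$ of $(\tau - \lambda) u = 0$, and let $v$ equal $u$ on $[c_1, c_2]$ and vanish elsewhere. Integrating by parts and using $\tau u = \lambda u$ together with $u(c_1) = u(c_2) = 0$ yields $\mathfrak{t}[v] = \lambda (v,v)_{L^2((a,b); r\,dx)}$, so that $\mathfrak{t}[v] - \lambda_0 (v,v)_{L^2((a,b); r\,dx)} = (\lambda - \lambda_0) \|v\|_{L^2((a,b); r\,dx)}^2 < 0$, contradicting the form inequality above. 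The symmetric construction at $a$ then shows that $\tau - \lambda$ is nonoscillatory at both endpoints for every $\lambda < \lambda_0$.

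For $(ii) \Rightarrow (i)$ the strategy is to produce, for a suitable $\lambda_0 < \nu_0$, a strictly positive solution $u_0$ of $(\tau - \lambda_0) u_0 = 0$ on all of $(a,b)$, and then run the ground-state substitution. Nonoscillation at each endpoint for $\lambda$ just below $\nu_0$ gives disconjugacy of $\tau - \lambda$ on $(a, \beta_1)$ and on $(\beta_2, b)$ for some $\beta_1, \beta_2 \in (a,b)$; by Sturm comparison the zero count on the compact middle interval is nonincreasing as $\lambda$ decreases, so (using that nonoscillation persists for all $\lambda < \nu_0$) one may fix $\lambda_0 < \nu_0$ small enough that $\tau - \lambda_0$ is disconjugate on all of $(a,b)$, whence a solution $u_0 > 0$ on $(a,b)$ exists. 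Writing $f = u_0 g$ for $f \in \dom(\dot T_{min})$ and using $-(p u_0')' + q u_0 = \lambda_0 r u_0$, a direct computation collapses the form to
\[
\mathfrak{t}[f] - \lambda_0 (f, f)_{L^2((a,b); r\,dx)} = \int_a^b p\, u_0^2\, |g'|^2\, dx \geq 0,
\]
the total-derivative term integrating to zero by the compact support of $f$. Passing to the closure gives $T_{min} \geq \lambda_0 I$, and the corresponding lower semiboundedness of any symmetric extension follows from standard extension theory.

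The main obstacle is twofold, and in both places the weak regularity of Hypothesis \ref{h2.1} demands care. First, in $(i) \Rightarrow (ii)$ the glued function $v$ lies in the form domain but not in $\dom(T_{min})$, since its quasi-derivative $v^{[1]} = p v'$ jumps at $c_1, c_2$; one must therefore transfer the semiboundedness inequality from $\dom(T_{min})$ to the associated closed form (equivalently, approximate $v$ in the form norm by elements of $\dom(\dot T_{min})$ while controlling $\mathfrak{t}$). Second, in $(ii) \Rightarrow (i)$ the passage from nonoscillation at the two endpoints to genuine disconjugacy of $\tau - \lambda_0$ on the whole open interval $(a,b)$ --- and hence to a globally positive $u_0$ --- is the real content; the requisite local-to-global Sturm comparison and the existence of the principal positive solution under merely $L^1_{loc}$ coefficients are classical and can be quoted from the oscillation-theoretic references cited above, in particular \cite[Theorem~7.4.4]{GZ20} and \cite[Sect.~14]{We87}.
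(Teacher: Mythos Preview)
The paper does not prove Theorem \ref{t2.6}; it is stated there as a classical background result and attributed to the references listed at the start of Section \ref{s2} (in particular \cite[Theorem~7.4.4]{GZ20} and \cite[Sect.~14]{We87}). So there is no ``paper's own proof'' to compare against.

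That said, your proposal is a correct and standard argument. Both directions are handled the way the classical sources do it: for $(i)\Rightarrow(ii)$ the nodal-domain trial function $v$ and the resulting form identity $\mathfrak{t}[v]=\lambda\|v\|_{L^2(r\,dx)}^2$ give the contradiction, and you correctly flag that $v$ lives only in the form domain (since $v^{[1]}$ jumps at $c_1,c_2$), so one must invoke the closed form associated with the semibounded $T_{min}$ rather than the operator inequality directly. For $(ii)\Rightarrow(i)$ the Jacobi/Picone factorization with a strictly positive solution $u_0$ is exactly the right tool, and your reduction of global disconjugacy to nonoscillation at the endpoints plus a regular Sturm--Liouville lowest-eigenvalue bound on a compact middle interval is the standard route (one takes $\lambda_0$ below both $\nu_0$ and the lowest Dirichlet eigenvalue on that interval; Sturm comparison then propagates disconjugacy near the endpoints from $\nu_0$ down to $\lambda_0$). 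The only point I would make explicit is that the identity $\mathfrak t[f]-\lambda_0\|f\|^2=\int p\,u_0^2|g'|^2\,dx$ for $f=u_0 g$ requires $g=f/u_0\in AC_{loc}((a,b))$ with $pu_0^2|g'|^2\in L^1$, which holds for $f\in\dom(\dot T_{min})$ precisely because $u_0>0$ on $(a,b)$ and $\supp(f)$ is compact; under Hypothesis \ref{h2.1} this needs no extra smoothness.
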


We also recall Sturm's comparison result in the following form.

\begin{theorem} \lb{t2.7} 
Assume that $p, q_j, r$ satisfy Hypothesis \ref{h2.1} and denote
\begin{equation} 
\tau_j = r(x)^{-1}[-(d/dx) p(x) (d/dx) + q_j(x)]  \, \text{ for a.e.~$x\in(a,b) \subseteq \bbR$,} \; j =1,2.  
	\lb{2.7}
\end{equation}  
Fix $\lambda \in \bbR$ and let $u_j$ be a real-valued solution of $\tau_j u_j= \lambda u_j$, $j=1,2$. 
If for some $x_0\in (a,b) \subseteq \bbR$,  
\begin{align}
\begin{split}
& q_2\geq q_1 \, \text{ a.e.\ on } (a,b),
\\
& u_1\neq 0 \, \text{ on } (a,b)\bs\{x_0\},
\\
& u_2(x_0)=u_1(x_0),\quad u_2^{[1]}(x_0)=u_1^{[1]}(x_0),
\end{split}
	\lb{2.8}
\end{align}
then
\begin{align}
|u_2(x)|\geq|u_1(x)| \, \text{ for all } x\in(a,b),
	\lb{2.9}
\end{align}
in particular,
\begin{equation}
 u_2 \neq 0 \, \text{ on } (a,b)\bs\{x_0\}. 
	\lb{2.10}
\end{equation}
\end{theorem}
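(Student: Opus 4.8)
The plan is to run a Wronskian comparison argument driven by the inequality $q_2 \geq q_1$. Define the generalized Wronskian
\begin{equation*}
W(x) = u_1(x) u_2^{[1]}(x) - u_2(x) u_1^{[1]}(x), \quad x \in (a,b),
\end{equation*}
which lies in $AC_{loc}((a,b))$ since $u_j, u_j^{[1]} \in AC_{loc}((a,b))$. Using $u_j' = u_j^{[1]}/p$ together with the differential equations rewritten as $\big(u_j^{[1]}\big)' = (q_j - \la r) u_j$ a.e., the two cross terms cancel and one computes
\begin{equation*}
W'(x) = [q_2(x) - q_1(x)]\, u_1(x) u_2(x) \quad \text{for a.e.\ } x \in (a,b).
\end{equation*}
The matching conditions $u_2(x_0) = u_1(x_0)$ and $u_2^{[1]}(x_0) = u_1^{[1]}(x_0)$ give $W(x_0) = 0$, so integration yields $W(x) = \int_{x_0}^x (q_2 - q_1)\, u_1 u_2 \, dt$.

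I would then compare $u_1$ and $u_2$ through their quotient on one side of $x_0$, say on $(x_0,b)$; the interval $(a,x_0)$ is covered afterward by the reflection $x \mapsto a+b-x$, which preserves Hypothesis \ref{h2.1}, maps $(a,x_0)$ to the right of the reflected base point, and leaves the matching conditions and $q_2 \geq q_1$ intact. Replacing $(u_1,u_2)$ by $(-u_1,-u_2)$ if necessary (this changes neither $W$ nor the hypotheses), I may assume $u_1 > 0$ on $(x_0,b)$, which is exactly the part of \eqref{2.8} asserting $u_1 \neq 0$ there. On the maximal subinterval $(x_0,\xi) \subseteq (x_0,b)$ on which $u_2 > 0$, one has $u_1 u_2 > 0$ and $q_2 - q_1 \geq 0$, whence $W' \geq 0$ a.e.; together with $W(x_0) = 0$ this forces $W \geq 0$ on $(x_0,\xi)$. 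Setting $v = u_2/u_1$, which is $AC_{loc}$ wherever $u_1 > 0$, one has $v' = W/\big(p\, u_1^2\big) \geq 0$ a.e., so $v$ is nondecreasing on $(x_0,\xi)$.

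It remains to pin down $v$ near $x_0$ and to exclude $\xi < b$, which I expect to be the only genuinely delicate point. If $u_1(x_0) \neq 0$, then $v$ is continuous at $x_0$ with $v(x_0) = 1$; if $u_1(x_0) = 0$ (necessarily a simple zero, since $u_1^{[1]}(x_0) = u_2^{[1]}(x_0) \neq 0$ by uniqueness for the initial value problem), then writing $u_j(x) = \int_{x_0}^x u_j^{[1]}(t)/p(t)\,dt$ and using $u_j^{[1]}(t) \to u_1^{[1]}(x_0)$ as $t \to x_0$ together with $\int_{x_0}^x dt/p(t) \to 0$ gives $v(x) \to 1$ as $x \to x_0^+$, in spite of the merely $L^1_{loc}$ behavior of $1/p$. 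In either case the monotonicity of $v$ yields $v \geq 1$, that is, $u_2 \geq u_1 > 0$ on $(x_0,\xi)$. Were $\xi < b$, continuity would force $u_2(\xi) = 0$, contradicting $u_2(\xi) \geq u_1(\xi) > 0$; hence $\xi = b$ and $u_2 \geq u_1 = |u_1|$ throughout $(x_0,b)$. Combining the two sides gives $|u_2| \geq |u_1|$ on $(a,b)$, which is \eqref{2.9}, and since $u_1 \neq 0$ off $x_0$, the nonvanishing \eqref{2.10} follows immediately. The main obstacle is thus the continuation step at a zero of $u_1$, where both the indeterminate boundary value $v(x_0^+)$ and the weak regularity of $p$ must be controlled; the rest is the routine Wronskian computation.
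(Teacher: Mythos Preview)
The paper does not actually prove Theorem~\ref{t2.7}; it is simply recalled there as a known Sturm comparison result (the paper writes ``We also recall Sturm's comparison result in the following form'' and cites standard references such as \cite{GZ20}). So there is no proof in the paper to compare against.

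Your argument is the standard Wronskian proof and is correct. The computation $W'=(q_2-q_1)u_1u_2$, the monotonicity of $v=u_2/u_1$ via $v'=W/(pu_1^2)$, and the continuation argument excluding $\xi<b$ are all fine; your treatment of the case $u_1(x_0)=0$ via the integral representation $u_j(x)=\int_{x_0}^x u_j^{[1]}(t)/p(t)\,dt$ and continuity of $u_j^{[1]}$ correctly yields $v(x_0^+)=1$ despite only $1/p\in L^1_{loc}$. One cosmetic point: the reflection $x\mapsto a+b-x$ you invoke for the interval $(a,x_0)$ tacitly assumes both endpoints are finite. It is cleaner (and no extra work) to rerun the same argument directly on $(a,x_0)$: there $W'\geq 0$ still holds on the maximal interval where $u_1u_2>0$, so $W\leq W(x_0)=0$ to the left of $x_0$, hence $v'=W/(pu_1^2)\leq 0$ and $v$ is nonincreasing with $v(x_0^-)=1$, giving $v\geq 1$ on that side as well.
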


Next, we also recall the following general comparison result (due to Kurss \cite{Ku67} in the special 
case\footnote{One of us, F.G., is indebted to Hubert Kalf \cite{Ka15} for kindly and repeatedly sharing his detailed notes on the proof of Theorem \ref{t2.8} and on various ramifications of this circle of ideas.} $r=1$):

\begin{theorem} \lb{t2.8} 
Assume that $p, q_j, r$ satisfy Hypothesis \ref{h2.1} and denote 
\begin{equation}
\tau_j = r(x)^{-1}[-(d/dx) p(x) (d/dx) + q_{j}(x)] \, \text{ for a.e.~$x\in(a,b) \subseteq \bbR$,} \; j =1,2.
     \lb{2.11}
\end{equation}
Suppose that $\tau_1$ is nonoscillatory and in the limit point case at $a$ and 
that $q_2 \geq q_1$ a.e.\ on $(a,b) \subseteq \bbR$. Then $\tau_2$ is also nonoscillatory and 
in the limit point case at $a$. The analogous statement applies to the endpoint $b$. 
\end{theorem}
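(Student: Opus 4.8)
The plan is to transfer \emph{both} properties from $\tau_1$ to $\tau_2$ through a single application of the Sturm comparison result, Theorem \ref{t2.7}, taken at $\lambda = 0$, exploiting the monotonicity $|u_2| \geq |u_1|$ it provides in the direction of increasing potential. The guiding intuition is that a larger potential produces pointwise larger (matched) solutions, and larger solutions are ``more non-$L^2$'' near the endpoint, which is precisely the defining feature of the limit point case.

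First I would extract a suitable solution of $\tau_1 u_1 = 0$. Since $\tau_1$ is in the limit point case at $a$, it is not in case $(i)$ of Theorem \ref{t2.2}; in its contrapositive form, case $(i)$ would force \emph{every} solution at $z=0$ to lie in $L^2((a,b); r\,dx)$ near $a$, so there must exist a real-valued solution $u_1$ of $\tau_1 u_1 = 0$ that is \emph{not} in $L^2((a,b); r\,dx)$ near $a$. Because $\tau_1$ is moreover nonoscillatory at $a$, Remark \ref{r2.5} shows that $u_1$ has only finitely many zeros near $a$, so there is a point $c_0 \in (a,b)$ with $u_1 \neq 0$ on $(a,c_0)$.

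Next I would fix any $x_0 \in (a,c_0)$ and let $u_2$ denote the solution of $\tau_2 u_2 = 0$ determined by $u_2(x_0) = u_1(x_0)$ and $u_2^{[1]}(x_0) = u_1^{[1]}(x_0)$. Applying Theorem \ref{t2.7} on the interval $(a,c_0)$, where $q_2 \geq q_1$ and $u_1 \neq 0$ on $(a,c_0)\bs\{x_0\}$ both hold, yields $|u_2(x)| \geq |u_1(x)|$ for all $x \in (a,c_0)$, together with $u_2 \neq 0$ on $(a,c_0)\bs\{x_0\}$; since $u_2(x_0) = u_1(x_0) \neq 0$ as well, $u_2$ is in fact nonvanishing on all of $(a,c_0)$. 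Two conclusions follow at once. The pointwise bound $|u_2|\geq|u_1|$ forces $\int_a^{c_0} |u_2(x)|^2 r(x)\,dx = \infty$, so $u_2 \notin L^2((a,b); r\,dx)$ near $a$; by Theorem \ref{t2.2} this excludes the limit circle case, whence $\tau_2$ is in the limit point case at $a$. Simultaneously, $u_2$ is a real-valued solution of $\tau_2 u_2 = 0$ with no zeros in $(a,c_0)$, so by Definition \ref{d2.4} and Remark \ref{r2.5} the expression $\tau_2$ is nonoscillatory at $a$. The statement at $b$ follows verbatim by running the same argument on an interval $(c_0,b)$.

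I expect the only delicate point to be the passage from the limit point hypothesis as stated via Weyl's dichotomy (where $L^2$-counts are naturally controlled for $z \in \bbC\bs\bbR$) to the existence of a non-$L^2$ solution at the specific value $z=0$ that feeds into Theorem \ref{t2.7}; this is exactly the $z$-independence of the limit point/limit circle classification built into Theorem \ref{t2.2}. The role of the nonoscillation hypothesis is likewise essential rather than cosmetic: it is what guarantees that the non-$L^2$ solution $u_1$ can be taken nonvanishing near $a$, so that the comparison theorem, which requires $u_1 \neq 0$ off a single point, is applicable. Everything else reduces to the monotone comparison of Theorem \ref{t2.7} and the finite-zero characterization of nonoscillation.
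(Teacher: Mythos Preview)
Your argument is correct and is precisely the route the paper has in mind: it does not spell out a proof of Theorem~\ref{t2.8} but simply records that it is a consequence of Theorem~\ref{t2.7} (citing \cite[Theorem~7.4.6]{GZ20}), and your write-up carries out exactly that deduction. The structure also mirrors the paper's explicit proof of the companion limit circle result, Theorem~\ref{t2.9}, with the inequality reversed.
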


Theorem \ref{t2.8} is a consequence of Theorem \ref{t2.7} (cf. \cite[Theorem~7.4.6]{GZ20}), as is its limit circle analogue below:

\begin{theorem}	\lb{t2.9}
Under the assumptions of Theorem \ref{t2.8}, suppose $\tau_{1}$ is nonoscillatory and in the limit circle case at $a$ and that $q_{2}\leq q_{1}$ a.e.~on $(a,b)\subseteq\bbR$.  Then $\tau_{2}$ is in the limit circle case at $a$.  The analogous statement applies to the endpoint $b$.
\end{theorem}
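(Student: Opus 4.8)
The plan is to verify the defining property of the limit circle case from Definition~\ref{d2.3}: that for one (hence, by Theorem~\ref{t2.2}, every) spectral parameter, all solutions of $\tau_2 u = \lambda u$ lie in $L^2((a,c);r\,dx)$ near $a$ for some $c \in (a,b)$. Since $\tau_1$ is nonoscillatory at $a$, I first fix a real $\lambda$ for which $\tau_1 - \lambda$ is nonoscillatory; replacing $q_j$ by $q_j - \lambda r$ (which preserves Hypothesis~\ref{h2.1} and the inequality $q_2 \leq q_1$) I may take $\lambda = 0$. By Definition~\ref{d2.4} and Remark~\ref{r2.5} there is then a real solution $v_1$ of $\tau_1 v_1 = 0$ with $v_1 > 0$ on some $(a,c)$, and since $\tau_1$ is in the limit circle case both $v_1$ and the linearly independent solution $u_1(x) = v_1(x)\int_x^c \f{dt}{p(t) v_1(t)^2}$ lie in $L^2((a,c);r\,dx)$. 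Concretely, this records the two integrability facts $\int_a^c v_1^2\, r\,dx < \infty$ and $\int_a^c v_1^2 \big(\int_x^c \f{dt}{p v_1^2}\big)^2 r\,dx < \infty$, which are the only consequences of the limit circle hypothesis on $\tau_1$ that I will use.

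If $\tau_2$ happens to be nonoscillatory at $a$, the argument is immediate and parallels the limit point case of Theorem~\ref{t2.8}: every real solution $w$ of $\tau_2 w = 0$ then has at most finitely many zeros near $a$, so $w \neq 0$ on some $(a,c')$. Fixing $x_0 \in (a,c')$ and letting $\phi$ be the solution of $\tau_1 \phi = 0$ with $\phi(x_0) = w(x_0)$ and $\phi^{[1]}(x_0) = w^{[1]}(x_0)$, Theorem~\ref{t2.7} applied on $(a,c')$ with the smaller potential $q_2 \leq q_1$ and the nonvanishing solution $w$ yields $|\phi| \geq |w|$ on $(a,c')$. As $\tau_1$ is in the limit circle case, $\phi \in L^2$ near $a$, whence $w \in L^2$ near $a$; since every complex solution is a combination of real ones, all solutions of $\tau_2 u = 0$ are square-integrable near $a$, so $\tau_2$ is in the limit circle case.

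The substantive difficulty is that, because $q_2 \leq q_1$ makes $\tau_2$ \emph{less} prone to nonoscillation than $\tau_1$ (this is precisely why, in contrast to Theorem~\ref{t2.8}, the conclusion asserts only the limit circle property and not nonoscillation of $\tau_2$), the expression $\tau_2$ may be oscillatory at $a$; then no solution is eventually nonvanishing and Theorem~\ref{t2.7} cannot be invoked directly. To handle this I would pass to the Liouville-type variable $z = w/v_1$. A direct computation turns $\tau_2 w = 0$ into $\big(p v_1^2 z'\big)' = (q_2 - q_1) v_1^2 z$, a Sturm--Liouville equation whose potential $(q_2 - q_1)v_1^2$ is nonpositive and whose natural weight is $v_1^2 r$, with the feature that $w \in L^2((a,c);r\,dx)$ near $a$ if and only if $z \in L^2((a,c); v_1^2 r\,dx)$ near $a$. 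Under this change the two solutions of the $\tau_1$ problem become the constant $1$ and $S(x) = \int_x^c \f{dt}{p v_1^2}$, and the two integrability facts above read $\int_a^c v_1^2 r\,dx < \infty$ and $\int_a^c S^2 v_1^2 r\,dx < \infty$. Because the first of these is finite, it suffices to establish the amplitude bound $|z(x)| \leq C\,[1 + S(x)]$ as $x \to a$.

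Establishing this bound is the main obstacle, and it is where the genuine work (the ``certain computations'' alluded to for the principal theorems) resides. The route I would take is variation of parameters against the pair $\{1, S\}$, whose Wronskian relative to $p v_1^2$ is constant; this represents every solution as $z(x) = c_1 + c_2 S(x) + \int_x^c [S(x) - S(t)](q_2 - q_1)(t)\, v_1(t)^2 z(t)\,dt$. The sign condition $q_2 \leq q_1$ together with $S(x) \geq S(t)$ for $x \leq t$ makes the kernel sign-definite, which in the nonoscillatory regime collapses the integral term and gives $|z| \leq |c_1| + |c_2| S$ at once; in the oscillatory regime the same representation must instead be fed into a Gronwall-type estimate, and the delicate point is to close that estimate using only the two integrability conditions inherited from the limit circle hypothesis on $\tau_1$ and the sign of $q_2 - q_1$, thereby controlling the amplitude of the oscillating solution $z$ near $a$. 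Once $|z| = \Oh(1 + S)$ is secured, $\int_a^c z^2 v_1^2 r\,dx \leq 2C^2\big(\int_a^c v_1^2 r\,dx + \int_a^c S^2 v_1^2 r\,dx\big) < \infty$, so every solution $w = v_1 z$ of $\tau_2 w = 0$ is square-integrable near $a$, and $\tau_2$ is in the limit circle case at $a$ by Theorem~\ref{t2.2} and Definition~\ref{d2.3}. The statement for the endpoint $b$ follows by the symmetric argument.
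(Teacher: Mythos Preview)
The paper's proof is a three-line direct application of Theorem~\ref{t2.7} with no case split: choose two linearly independent \emph{positive} solutions $u_{1,1},u_{1,2}$ of $\tau_1u=0$ on some $(a,x_0)$ (these exist by nonoscillation and both lie in $L^2$ by the limit circle hypothesis), let $u_{2,j}$ solve $\tau_2u=0$ with the same Cauchy data at $x_0$, invoke Theorem~\ref{t2.7} to get $|u_{2,j}|\le|u_{1,j}|$ on $(a,x_0)$, and conclude $u_{2,j}\in L^2$ near $a$. Your nonoscillatory branch is this same argument with the roles of ``given'' and ``matched'' solution interchanged, so in that regime you and the paper agree.

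Your worry about the oscillatory case is precisely the subtle point lurking in that short argument: Theorem~\ref{t2.7}, as stated, places the nonvanishing hypothesis on the solution attached to the \emph{smaller} potential, so after relabeling it is $u_{2,j}$ (not $u_{1,j}$) that must be zero-free---and the paper does not check this. So you have correctly put your finger on the step the paper's proof glosses over. That said, the route you sketch for the oscillatory regime---the Liouville substitution $z=w/v_1$, variation of parameters against $\{1,S\}$, and a Gronwall-type closure using only the two integrability facts and the sign of $q_2-q_1$---is left as a plan rather than carried out; you identify the ``delicate point'' but do not actually establish the amplitude bound $|z|=\Oh(1+S)$, and simple examples with piecewise-constant $q_2\le q_1$ already show the amplitude of $z$ need not stay bounded by $1$, so the closure is genuinely nontrivial. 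In short, compared with the paper you have traded a brief argument that leans hard on Theorem~\ref{t2.7} for a longer one whose essential step is still open.
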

\begin{proof}
By assumption, both fundamental solutions $u_{1,1}$ and $u_{1,2}$ of $\tau_{1}u_{1}=0$ are strictly positive on $(a,x_{0})$ for some $x_{0}\in(a,b)$ with $u_{1,j}\in L^{2}((a,x_{0});r\,dx)$, $j\in\{1,2\}$.  Let $u_{2,j}$, $j\in\{1,2\}$ be the solutions of $\tau_{2}u_{2}=0$ with initial data $u_{2,j}(x_{0})=u_{1,j}(x_{0})$ and $u_{2,j}^{[1]}(x_{0})=u_{1,j}^{[1]}(x_{0})$.  Then, by Theorem \ref{t2.7}, $|u_{2,j}(x)|\leq|u_{1,j}(x)|$ for all $x\in(a,x_{0})$, which implies $u_{2,j}\in L^{2}((a,x_{0});r\,dx)$, $j\in\{1,2\}$.  Thus $\tau_{2}$ is in the limit circle case at $a$.
\end{proof}

\section{Results on the Limit Point and Limit Circle Case} \lb{s3}

In this section we prove our principal limit point/limit circle results on the special two-coefficient differential expression 
$\tau_{\alpha}$ in \eqref{3.1} below at $x=0$. 

Specializing to the case of half-line two-coefficient Sturm--Liouville operators, where 
\begin{align} 
\begin{split} 
& a = 0, \quad b \in (0,\infty) \cup \{\infty\}, \quad p(x)=x^{\alpha}, \; \alpha \in \bbR, \quad r(x)=1, \; x \in (0,b), \\
& q \in L_{loc}^{1}((0,b);dx) \, \text{ is real-valued~a.e.},
\end{split}
	\lb{3.1}
\end{align} 
$\tau$ in \eqref{2.1} now takes on the simplified form, 
\begin{equation}
\tau_{\alpha} = -(d/dx)x^{\alpha}(d/dx) + q(x) \, \text{ for a.e.~$x \in (0,b)$,  $\alpha \in \bbR$.}
	\lb{3.2}
\end{equation}
Given $\alpha \in \bbR$, or $\alpha \in (-\infty,2)$, we will derive conditions on $q$ that imply the limit point (and nonoscillatory) or limit circle behavior of $\tau_{\alpha}$ at $x=0$.  

Introducing iterated logarithms for $0 < x$ sufficiently small,
\begin{equation}
\ln_1(x) = |\ln(x)| = \ln(1/x), \quad \ln_{j+1}(x) = \ln(\ln_j(x)), \quad j \in \bbN
	\lb{3.3}
\end{equation}
(rendering $\ln_{\ell}(\dott)$, $1 \leq \ell \leq N$, in Theorems \ref{t3.1} and \ref{t3.3} below, strictly positive), our first principal result reads as follows:

\begin{theorem} \lb{t3.1}
Suppose that $q \in L_{loc}^{1}((0,b);dx)$ is real-valued a.e.~on $(0,b)$. \\[1mm]
$(i)$ Let $\alpha \in \bbR$ and assume that for a.e.~$0<x$ sufficiently small, 
\begin{equation}
q(x) \geq [(3/4)-(\alpha/2)]x^{\alpha-2}.
	\lb{3.4} 
\end{equation}
Then $\tau_{\alpha}$ is nonoscillatory and in the limit point case at $x=0$. \\[1mm]
$(ii)$ Let\footnote{Only $\alpha \in (- \infty,2)$ can improve on item $(i)$.} $\alpha\in (-\infty,2)$ and assume there exist $N\in\bbN$ and $\varepsilon>0$,  such 
that for a.e.~$0<x$ sufficiently small $($depending on $N$ and $\varepsilon$$)$,
\begin{align}
\begin{split} 
&q(x)\geq[(3/4)-(\alpha/2)]x^{\alpha-2} - (1/2) (2 - \alpha) x^{\alpha-2} 
\sum_{j=1}^{N}\prod_{\ell=1}^{j}[\ln_{\ell}(x)]^{-1}     \\
&\quad\quad\quad +[(3/4)+\varepsilon] x^{\alpha-2}[\ln_{1}(x)]^{-2} \equiv Q_{\alpha,N,\varepsilon}(x).
\end{split}       \lb{3.5}
\end{align}
Then $\tau_{\alpha}$ is nonoscillatory and in the limit point case at $x=0$.
\end{theorem}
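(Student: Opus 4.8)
The plan is to reduce both items to the Kurss-type comparison result, Theorem~\ref{t2.8}: in each case I would exhibit a two-coefficient expression whose potential lies pointwise below the right-hand side of the hypothesis and which is itself nonoscillatory and in the limit point case at $x=0$; monotonicity of these two properties in the potential then transfers them to $\tau_\alpha$.

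For item $(i)$ I would take the comparison potential to be exactly the borderline $q_1(x)=[(3/4)-(\alpha/2)]x^{\alpha-2}$ and solve $-(x^\alpha u')'+q_1 u=0$ by the power ansatz $u=x^\beta$. A direct substitution turns this into the indicial equation $\beta^2+(\alpha-1)\beta-(3/4)+(\alpha/2)=0$, whose discriminant simplifies to $(\alpha-2)^2$, so that the roots are $\beta=-1/2$ and $\beta=(3-2\alpha)/2$ (coinciding at $\alpha=2$, where the second solution acquires a factor $\ln_1(x)$). Each such real power has a fixed sign near $0$, so the comparison expression is nonoscillatory; and since $x^{-1/2}\notin L^2((0,x_0);dx)$ for every $\alpha$, not all solutions of the comparison equation lie in $L^2$ near $0$, whence by Weyl's alternative (Theorem~\ref{t2.2}) the expression is in the limit point case at $x=0$. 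As $q\ge q_1$ by \eqref{3.4}, Theorem~\ref{t2.8} finishes item $(i)$.

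For item $(ii)$ I would run the same strategy with an iterated-logarithmic trial function. Motivated by the root $\beta=-1/2$ above, I set $u_1(x)=x^{-1/2}\prod_{\ell=1}^{N}[\ln_\ell(x)]^{-1/2}$ and compute the potential $\hat Q(x):=(x^\alpha u_1'(x))'/u_1(x)$ for which $u_1$ is an exact solution of $-(x^\alpha u')'+\hat Q u=0$. The reason for the half-integer exponents is that, after the reduction $u=x^{-1/2}g$ and the change of variable $t=\ln_1(x)$, the first-order drift produced by differentiating the logarithmic factors accounts for the entire finite sum $(1/2)(2-\alpha)\sum_{j=1}^N\prod_{\ell=1}^j[\ln_\ell(x)]^{-1}$ occurring in \eqref{3.5}, while the curvature of $x^{-1/2}$ reproduces the critical coefficient $3/4$ multiplying $[\ln_1(x)]^{-2}$. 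The net effect is that $\hat Q$ agrees with $Q_{\alpha,N,\varepsilon}$ through all the $\prod[\ln_\ell]^{-1}$ orders and differs from it, at the critical order $x^{\alpha-2}[\ln_1(x)]^{-2}$, precisely by $-\varepsilon\,x^{\alpha-2}[\ln_1(x)]^{-2}$, the remaining discrepancies being of strictly smaller order $x^{\alpha-2}[\ln_1(x)]^{-2}[\ln_2(x)]^{-1}$ and below. Hence $\hat Q(x)\le Q_{\alpha,N,\varepsilon}(x)\le q(x)$ for $0<x$ small, which is exactly where $\varepsilon>0$ enters. Since $u_1>0$ near $0$ it is nonoscillatory (Remark~\ref{r2.5}), and since $\int_0 x^{-1}\prod_{\ell=1}^N[\ln_\ell(x)]^{-1}\,dx=\infty$ the solution $u_1\notin L^2$ near $0$, so the comparison expression is in the limit point case; Theorem~\ref{t2.8} then yields item $(ii)$.

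The hard part will be the bookkeeping in item $(ii)$: one must differentiate $u_1$ twice, expand every iterated logarithm, and verify that all contributions down to order $x^{\alpha-2}\prod_{\ell=1}^N[\ln_\ell(x)]^{-1}$ cancel against the finite sum in \eqref{3.5}, leaving a leftover at the critical order with the favorable sign $-\varepsilon$. This is the computation I would relegate to the appendix; by contrast, the conceptual skeleton---the reduction to Theorem~\ref{t2.8}, the indicial roots $-1/2,(3-2\alpha)/2$, and the choice of half-integer logarithmic exponents---is elementary.
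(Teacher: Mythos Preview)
Your proposal is correct and follows essentially the same route as the paper: in both parts you pick the same comparison potential and trial solution the paper uses (in particular $y_N(x)=x^{-1/2}\prod_{\ell=1}^{N}[\ln_\ell(x)]^{-1/2}$ for item~$(ii)$), verify nonoscillation and the failure of $L^2$-integrability near $0$, and then invoke Theorem~\ref{t2.8}. The paper's Lemma~\ref{lA.1} carries out exactly the bookkeeping you anticipate, producing $\hat Q=q_{\alpha,N}$ as in \eqref{3.11}, and the inequality $q_{\alpha,N}\le Q_{\alpha,N,\varepsilon}$ for $0<x$ small is recorded as \eqref{A.18}; the only cosmetic difference is that in item~$(i)$ the paper works directly with the single root $\beta=-1/2$ rather than displaying the full indicial equation.
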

\begin{proof}
In the following $0 < x$ is assumed to be sufficiently small. \\[1mm] 
$(i)$ Abbreviating for $\alpha \in \bbR$,  
\begin{align}
& q_{\alpha,0}(x)= [(3/4) - (\alpha/2)]x^{\alpha-2},     \lb{3.6} \\
& y_{0}(x)=x^{-1/2},     \lb{3.7} \\
&\tau_{\alpha,0}=-(d/dx)x^{\alpha}(d/dx)+q_{\alpha,0}(x),
	\lb{3.8}
\end{align}
one confirms that 
\begin{align}
(\tau_{\alpha,0}\,y_{0})(x)=0, \quad \alpha \in \bbR.
	\lb{3.9}
\end{align}
In particular, \eqref{3.7} and \eqref{3.9} prove that $\tau_{\alpha,0}$, $\alpha \in \bbR$, is nonoscillatory at $x=0$. 
Moreover, since for $R>0$,
\begin{align}
y_{0}\notin L^{2}((0,R);dx),  \quad \alpha \in \bbR,
	\lb{3.10} 
\end{align}
$\tau_{\alpha,0}$, $\alpha \in \bbR$, is nonoscillatory and in the limit point case at $x=0$, and hence so is $\tau_{\alpha}$, 
$\alpha \in \bbR$, by Theorem \ref{t2.8}. \\[1mm] 
$(ii)$ Since the sum of the 2nd and 3rd terms on the right-hand side of \eqref{3.5} would be nonnegative for 
$\alpha \geq 2$, Theorem \ref{t2.8} yields that only the case $\alpha \in (-\infty, 2)$ can improve upon item $(i)$. Next, we abbreviate for $\alpha \in (-\infty,2)$, $N\in\bbN$, and $0 < x$ sufficiently small,\footnote{\,If $N=1$ one interprets, as usually, sums and products over empty index sets as $0$ and $1$, respectively. \lb{f1}}
\begin{align}
&q_{\alpha,N}(x)= [(3/4) - (\alpha/2)]x^{\alpha-2}  
- (1/2) (2 - \alpha) x^{\alpha-2}\sum_{j=1}^{N}\prod_{\ell=1}^{j}[\ln_{\ell}(x)]^{-1}\no\\
&\hspace{1.6cm}+(3/4)x^{\alpha-2}\sum_{j=1}^{N}\prod_{\ell=1}^{j}[\ln_{\ell}(x)]^{-2}\no\\
&\hspace{1.6cm}+x^{\alpha-2}\sum_{j=1}^{N-1}\prod_{\ell=1}^{j}[\ln_{\ell}(x)]^{-2}\sum_{m=j+1}^{N}\prod_{p=j+1}^{m}[\ln_{p}(x)]^{-1}  , 	\lb{3.11}\\
&y_{N}(x)=x^{-1/2}\prod_{k=1}^{N} [\ln_{k}(x)]^{-1/2},
	\lb{3.12}\\
&\tau_{\alpha,N}=-(d/dx)x^{\alpha}(d/dx)+q_{\alpha,N}(x), 
	\lb{3.13}
\end{align}
and claim (cf.\ Lemma \ref{lA.1}) that 
\begin{align}
(\tau_{\alpha,N}\,y_{N})(x)=0, \quad \alpha \in (-\infty,2), \; N \in \bbN.
	\lb{3.14}
\end{align}
Once more, \eqref{3.12} and \eqref{3.14} prove that $\tau_{\alpha,N}$, $\alpha \in (-\infty,2)$, $N \in \bbN$, is nonoscillatory at $x=0$. Moreover, since for $0 < \delta_N$ sufficiently small,
\begin{align}
y_{N}\notin L^{2}((0,\delta_N);dx),
	\lb{3.15}
\end{align}
$\tau_{\alpha,N}$, $\alpha \in (-\infty,2)$, $N \in \bbN$, is nonoscillatory and in the limit point case at $x=0$. 
Since, for $0 < \varepsilon$ and $0 < x$ both sufficiently small one infers that  
\begin{align}
& (3/4)x^{\alpha-2}\sum_{j=1}^{N}\prod_{\ell=1}^{j}[\ln_{\ell}(x)]^{-2}+x^{\alpha-2}\sum_{j=1}^{N-1}\prod_{\ell=1}^{j}
[\ln_{\ell}(x)]^{-2}\sum_{m=j+1}^{N}\prod_{p=j+1}^{m}[\ln_{p}(x)]^{-1}     \no\\
& \quad \leq[(3/4)+\varepsilon] x^{\alpha-2}[\ln_{1}(x)]^{-2},      \lb{A.18}
\end{align}
condition \eqref{3.5} implies that $q(x) \geq Q_{\alpha,N,\varepsilon}(x) \geq q_{\alpha,N}(x)$ for $0 < \varepsilon$ and $0<x$ sufficiently small. Thus,   
Theorem \ref{t2.8} implies that also $\tau_{\alpha}$, $\alpha \in (-\infty,2)$, is nonoscillatory and in the limit point case at $x=0$.
\end{proof}

Although not needed in the context of Theorem \ref{t3.1}, we note a second (necessarily nonoscillatory) linearly independent solution $\wti y_N$ of $\tau_{\alpha,N} y =0$ is a consequence of the standard reduction of order approach 
\begin{equation}
\wti y_N(x)= y_{N}(x) \int_{x}^c dt \, t^{-\alpha} y_{N}(t)^{-2}, \quad 0 < x < c \, \text{ sufficiently small.}
	\lb{3.17}
\end{equation}

\begin{remark} \lb{r3.2}
In connection with the nonoscillatory behavior of $\tau_{\alpha}$ we now recall the power weighted and logarithmically refined Hardy inequalities in the form (see \cite{GLMP21} and the references therein)
\begin{align}
\begin{split} 
& \int_0^\rho dx \, x^{\alpha} |f'(x)|^2 \geq \f{(1 - \alpha)^2}{4} \int_0^{\rho} dx \, x^{\alpha - 2} |f(x)|^2,   \\
& \hspace*{.95cm} \alpha \in \bbR, \; \rho \in (0,\infty) \cup \{\infty\}, \; f \in C_0^{\infty}((0,\rho)),
\end{split}
	\lb{3.18}
\end{align}
and
\begin{align}
\begin{split}
& \int_0^\rho dx \, x^{\alpha} |f'(x)|^2 \geq \f{(1 - \alpha)^2}{4} \int_0^{\rho} dx \, x^{\alpha - 2} |f(x)|^2     \\
& \quad + \f{1}{4} \sum_{j=1}^N \int_0^{\rho} dx \, x^{\alpha - 2} 
\Bigg(\prod_{\ell=1}^{j} [\ln_{\ell}(x/\gamma)]^{-2}\Bigg) |f(x)|^2,  
\\
& \quad \, N \in \bbN, \; \alpha \in \bbR, \; \rho, \gamma \in (0,\infty), \; \gamma \geq e_N \rho, 
\; f \in C_0^{\infty}((0,\rho)),
\end{split}
	\lb{3.19}
\end{align}
where
\begin{equation}
e_{0} = 0, \quad e_{j+1} = e^{e_{j}}, \quad j \in \bbN_{0} = \bbN \cup \{0\}.
	\lb{3.20}
\end{equation}
Inequalities \eqref{3.18} and \eqref{3.19} imply, in particular, that
\begin{align}
\begin{split} 
\bigg(- \f{d}{dx} x^{\alpha} \f{d}{dx} - \f{(1-\alpha)^2}{4} x^{\alpha - 2}\bigg)\bigg|_{C_0^{\infty}((0,\rho))} \geq 0,& \\
\alpha \in \bbR, \; \rho \in (0,\infty) \cup \{\infty\},&
\end{split}
	\lb{3.21}
\end{align}
and 
\begin{align}
\begin{split} 
\bigg(- \f{d}{dx} x^{\alpha} \f{d}{dx} - \f{(1-\alpha)^2}{4} x^{\alpha - 2} 
- \f{1}{4} x^{\alpha - 2} \sum_{j=1}^N \prod_{\ell=1}^{j} [\ln_{\ell}(x/\gamma)]^{-2} 
\bigg)\bigg|_{C_0^{\infty}((0,\rho))} \geq 0,&   \\
N \in \bbN, \; \alpha \in \bbR, \; \rho, \gamma \in (0,\infty), \; \gamma \geq e_N \rho.& 
\end{split}
	\lb{3.22}
\end{align}
All constants displayed in \eqref{3.18}--\eqref{3.22} are sharp (cf.\ \cite{GLMP21}). 

Since $[(3/4) - (\alpha/2)] \geq - (1-\alpha)^2/4$ is equivalent to $(\alpha - 2)^2 \geq 0$, which automatically holds for 
all $\alpha \in \bbR$, the assertion that $\tau_{\alpha,0}$, $\alpha \in \bbR$, is nonoscillatory (cf.\ the paragraph following \eqref{3.9}) is of course consistent with Hardy's inequality \eqref{3.18} which implies the following (much weaker) inequality 
\begin{align}
\begin{split} 
\bigg(- \f{d}{dx} x^{\alpha} \f{d}{dx} + \bigg(\f{3}{4} - \f{\alpha}{2}\bigg) x^{\alpha - 2}\bigg)\bigg|_{C_0^{\infty}((0,\rho))} \geq 0,& \\
\alpha \in \bbR, \;\, \rho \in (0,\infty) \cup \{\infty\}.&
\end{split}
	\lb{3.23}
\end{align}
Similarly, the assertion that $\tau_{\alpha,N}$, $\alpha \in \bbR$, $N \in \bbN$, is nonoscillatory (cf.\ the paragraph following \eqref{3.14}) is consistent with the logarithmic refinements of Hardy's inequality \eqref{3.19} as 
$[(3/4) - (\alpha/2)] > - (1-\alpha)^2/4$ is equivalent to $(\alpha - 2)^2/4 > 0$, which in turn automatically holds for 
all $\alpha \in (-\infty, 2)$. The subtle difference $[(3/4) - (\alpha/2)] > - (1-\alpha)^2/4$ versus 
$[(3/4) - (\alpha/2)] \geq - (1-\alpha)^2/4$ now is crucial as the leading logarithmic terms in \eqref{3.11} are of the 
form $[\ln_{\ell}(x)]^{-1}$ as opposed to the smaller terms 
$[\ln_{\ell}(x)]^{-2}$ in \eqref{3.19} and \eqref{3.22}. In particular, \eqref{3.22} now implies 
\begin{align}
& \bigg(- \f{d}{dx} x^{\alpha} \f{d}{dx} + \bigg(\f{3}{4} - \f{\alpha}{2}\bigg) x^{\alpha - 2}  
+ \f{\alpha - 2}{2} x^{\alpha - 2} \sum_{j=1}^N \prod_{\ell=1}^{j} [\ln_{\ell}(x/\gamma)]^{-1} 
\bigg)\bigg|_{C_0^{\infty}((0,\rho))} \geq 0,    \no \\
& \hspace*{7cm} N \in \bbN, \; \alpha \in \bbR, \; \gamma \geq e_N \rho,
	\lb{3.24} 
\end{align}
for suitable $\rho, \gamma \in (0,\infty)$, since for arbitrarily small $0 < \delta \equiv (\alpha - 2)^2/4$, the expression 
$\delta x^{\alpha - 2}$ dominates the second term on the right-hand side of \eqref{3.11} in a sufficiently small neighborhood of $x=0$. 
\hfill $\diamond$
\end{remark} 

Our second main result then reads as follows. 
\begin{theorem} \lb{t3.3}
Suppose that $q \in L_{loc}^{1}((0,b);dx)$ is real-valued a.e.~on $(0,b)$. \\[1mm] 
$(i)$ Let $\alpha \in (-\infty,2)$ and assume that there exists $\varepsilon \in (0,1)$ $($depending on $\alpha)$ such that 
for a.e.~$0<x$ sufficiently small $($depending on $\varepsilon)$, 
\begin{equation}
q(x)\leq[(3/4)-(\alpha/2) - \varepsilon]x^{\alpha-2}.
	\lb{3.25}
\end{equation}
Then $\tau_{\alpha}$ is in the limit circle case at $x=0$. \\[1mm]
$(ii)$ Let $\alpha\in (-\infty,2)$ and assume there exist $N\in\bbN$ and $\varepsilon\in(0,1)$ $($depending on 
$\alpha$ and $N$$)$, such that 
for a.e.~$0<x$ sufficiently small $($depending on $N$ and $\varepsilon$$)$, 
\begin{align}
\begin{split} 
q(x) &\leq [(3/4)-(\alpha/2)]x^{\alpha-2} 
- (1/2) (2 - \alpha) x^{\alpha-2}\sum_{j=1}^{N}\prod_{\ell=1}^{j}[\ln_{\ell}(x)]^{-1}    \\
&\quad - (\varepsilon/2) (2 - \alpha) x^{\alpha-2}\prod_{k=1}^{N}[\ln_{k}(x)]^{-1} \equiv \hatt Q_{\alpha,N,\varepsilon}(x).      \lb{3.26}
\end{split} 
\end{align}
Then, $\tau_{\alpha}$ is in the limit circle case at $x=0$.
\end{theorem}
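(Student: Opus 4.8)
The plan is to follow the proof of Theorem \ref{t3.1} verbatim in structure, but to invoke the limit circle comparison Theorem \ref{t2.9} in place of the limit point comparison Theorem \ref{t2.8}. Concretely, for each item I would exhibit an explicitly solvable comparison expression $\hatt\tau_{\alpha,N}=-(d/dx)x^{\alpha}(d/dx)+\hatt q_{\alpha,N}$ that is nonoscillatory and in the limit circle case at $x=0$ (that is, both fundamental solutions lie in $L^{2}$ near $0$) and whose potential dominates $q$ from above, $\hatt q_{\alpha,N}\geq q$ for $0<x$ sufficiently small. Theorem \ref{t2.9}, applied at the endpoint $a=0$, then immediately yields that $\tau_{\alpha}$ is in the limit circle case at $x=0$.

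For item $(i)$ I would take the Euler comparison potential $\hatt q_{\alpha,0}(x)=[(3/4)-(\alpha/2)-\varepsilon]x^{\alpha-2}$, so that $\hatt\tau_{\alpha,0}y=0$ has the explicit power solutions $x^{s_{\pm}}$ with $s_{\pm}=\big[(1-\alpha)\pm\sqrt{(\alpha-2)^{2}-4\varepsilon}\,\big]/2$. Since \eqref{3.25} persists when $\varepsilon$ is replaced by any smaller positive number, I may shrink $\varepsilon$ so that $\varepsilon<(\alpha-2)^{2}/4$; because $\alpha<2$ this range is nonempty and makes the discriminant positive, hence $\hatt\tau_{\alpha,0}$ nonoscillatory with two distinct real power solutions. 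A direct computation shows $s_{-}>-1/2\iff\varepsilon>0$, so both $x^{s_{\pm}}$ are square integrable near $0$ and $\hatt\tau_{\alpha,0}$ is in the limit circle case; with $q\leq\hatt q_{\alpha,0}$ from \eqref{3.25}, Theorem \ref{t2.9} settles item $(i)$.

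For item $(ii)$ the candidate comparison solution is the $\ln_{N}$-shifted analogue of \eqref{3.12},
\[
\hatt{y}_{N}(x)=x^{-1/2}\prod_{k=1}^{N-1}[\ln_{k}(x)]^{-1/2}\,[\ln_{N}(x)]^{-1/2-\varepsilon/2},
\]
and I would set $\hatt q_{\alpha,N}=(x^{\alpha}\hatt{y}_{N}')'/\hatt{y}_{N}$, evaluated exactly as in the computation behind Lemma \ref{lA.1}, but with the single replacement of the exponent $-1/2$ by $\gamma_{N}=-1/2-\varepsilon/2$ on $\ln_{N}$. Collecting terms as $\hatt q_{\alpha,N}=x^{\alpha-2}\big[(3/4)-(\alpha/2)+(2-\alpha)S+R\big]$ with $S(x)=\sum_{k=1}^{N}\gamma_{k}\prod_{\ell=1}^{k}[\ln_{\ell}(x)]^{-1}$, the linear piece $(2-\alpha)S$ reproduces exactly the two subtracted sums of $\hatt Q_{\alpha,N,\varepsilon}$ in \eqref{3.26}—the choice $\gamma_{N}=-1/2-\varepsilon/2$ being precisely what matches the coefficient $-(\varepsilon/2)(2-\alpha)$ of $\prod_{k=1}^{N}[\ln_{k}(x)]^{-1}$—while the remaining quadratic-in-$S$ remainder $R$ is a sum of manifestly nonnegative logarithmic terms, whence $\hatt q_{\alpha,N}\geq\hatt Q_{\alpha,N,\varepsilon}\geq q$ near $0$. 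Positivity of $\hatt{y}_{N}$ gives nonoscillation; the $\ln_{N}$-exponent $-1/2-\varepsilon/2<-1/2$ places $\hatt{y}_{N}$ in $L^{2}$ near $0$; and the reduction of order formula \eqref{3.17} shows the second solution is asymptotically a constant multiple of $\hatt{y}_{N}$—the relevant integral $\int_{0}t^{-\alpha}\hatt{y}_{N}(t)^{-2}\,dt$ converging because $\alpha<2$ forces the integrand to behave like $t^{1-\alpha}$ times logarithms—so it too is square integrable. Thus $\hatt\tau_{\alpha,N}$ is nonoscillatory and limit circle, and Theorem \ref{t2.9} completes item $(ii)$.

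I expect the main obstacle to be the exact evaluation of $\hatt q_{\alpha,N}$ together with the verification that its deviation from $\hatt Q_{\alpha,N,\varepsilon}$ consists solely of nonnegative logarithmic terms; this is the limit circle mirror of the appendix computation underlying \eqref{3.11}--\eqref{3.14} and carries essentially all of the bookkeeping. A secondary point requiring care—and the only place where the hypothesis $\alpha<2$ is used in an essential rather than cosmetic way—is the convergence of the reduction of order integral, which is exactly what guarantees that the second fundamental solution of $\hatt\tau_{\alpha,N}$ shares the leading behavior of $\hatt{y}_{N}$ and is therefore square integrable as well.
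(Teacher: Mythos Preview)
Your proposal is correct and follows essentially the same approach as the paper. In both parts you choose the same comparison solutions the paper does---the Euler solution $x^{s_\pm}$ in $(i)$ and $y_{N,\varepsilon}(x)=x^{-1/2}\prod_{k=1}^{N-1}[\ln_k(x)]^{-1/2}[\ln_N(x)]^{-1/2-\varepsilon/2}$ in $(ii)$---verify nonoscillation and that both fundamental solutions lie in $L^2$ near $0$ (the second via reduction of order and convergence of $\int_0 t^{1-\alpha}\prod[\ln_k(t)]\,[\ln_N(t)]^{1+\varepsilon}\,dt$, using $\alpha<2$), check that the resulting comparison potential dominates $\hatt Q_{\alpha,N,\varepsilon}$ because the leftover logarithmic terms are nonnegative, and then invoke Theorem \ref{t2.9}. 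The paper carries out the explicit computation of the comparison potential $q_{\alpha,N,\varepsilon}$ in Lemma \ref{lA.2} of the appendix; your description of the remainder $R$ as ``quadratic-in-$S$'' is a fair shorthand for that bookkeeping.
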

\begin{proof} 
In the following $0 < x$ is assumed to be sufficiently small. \\[1mm] 
$(i)$ Abbreviating for $\alpha \in (-\infty,2)$, $\beta \in (0,\infty)$,  
\begin{align}
& q_{\alpha,0,\beta}(x)= [(3/4) - (\alpha/2) - \beta]x^{\alpha-2},    
	\lb{3.27}\\
& y_{\alpha,0,\beta,j}(x)=x^{\gamma_{\alpha,\beta,j}},      \no \\
& \gamma_{\alpha,\beta,j} = (1/2) (1 - \alpha) - (1/2) (-1)^j \begin{cases} 
\big|(2 - \alpha)^2 - 4 \beta\big|^{1/2}, & 0 < 4 \beta \leq (2-\alpha)^2, \\
i \big|(2 - \alpha)^2 - 4 \beta\big|^{1/2}, & (2-\alpha)^2 \leq 4 \beta,
\end{cases} \no \\ 
& \hspace*{3.75cm}   \alpha \in (-\infty,2), \; \beta \in (0,\infty) \backslash \big\{(2-\alpha)^2\big/4\big\}, \;  j=1,2,      \lb{3.28}\\
& y_{\alpha,0,(2 - \alpha)^2/4,1}(x) = x^{(1 - \alpha)/2}, 
\quad y_{\alpha,0,(2 - \alpha)^2/4,2}(x) = x^{(1 - \alpha)/2} \ln(1/x),        \lb{3.29} \\ 
& \hspace*{5.05cm}   \gamma_{\alpha,(2 - \alpha)^2/4,j} = (1 - \alpha)/2, \; j=1,2,     \no \\
&\tau_{\alpha,0,\beta} = - (d/dx)x^{\alpha}(d/dx)+q_{\alpha,0,\beta}(x),
	\lb{3.30}
\end{align}
one confirms that 
\begin{align}
(\tau_{\alpha,0,\beta}\,y_{\alpha,0,\beta,j})(x)=0, \quad \alpha \in (-\infty,2), \; \beta \in (0,\infty), \; j =1,2.
	\lb{3.31}
\end{align}
To verify the limit circle property of $\tau_{\alpha,0,\beta}$, one needs to guarantee that for  some $\rho \in (0,1)$, $y_{\alpha,0,\beta,j} \in L^2((0, \rho);dx)$, $j=1,2$, equivalently, 
\begin{equation}
\Re(\gamma_{\alpha,\beta,j}) > - 1/2, \quad j = 1,2.
	\lb{3.32} 
\end{equation}
Inequality \eqref{3.32} in turn is equivalent to 
\begin{equation}
\alpha \in (-\infty,2), \; \beta \in (0,\infty).      \lb{3.33}
\end{equation}
Hence, choosing $\alpha \in (2,\infty)$ and $\beta \equiv \varepsilon \in \big(0, (2 - \alpha)^2/4\big]$ yields that for $0 < \varepsilon$ sufficiently small, $\tau_{\alpha,0,\varepsilon}$ is in the limit circle case and nonoscillatory (cf.\ \eqref{3.28}) at $x=0$. 
An application of Theorem \ref{t2.9} then yields that also $\tau_{\alpha}$ is in the limit circle case at $x=0$. \\[1mm] 
$(ii)$ We recall that $q_{\al,N}$ is given by \eqref{3.11} and abbreviate for $N\in\bbN$ and $0<x$ sufficiently 
small,\footnote{\,See footnote \ref{f1}.} 
\begin{align}
& \no q_{\alpha,N,\varepsilon}(x) = q_{\al,N}(x) - 
(\eps/2) (\alpha - 2)x^{\al-2}\prod_{k=1}^{N}[\ln_{k}(x)]^{-1}
+ \big(\eps^2\big/4\big) x^{\al-2}\prod_{k=1}^{N}[\ln_{k}(x)]^{-2}
\\ & \hspace*{1.4cm} \quad  +\eps x^{\al-2}\prod_{k=1}^{N}[\ln_{k}(x)]^{-1} 
\sum_{j=1}^{N}\prod_{\ell=1}^{j}[\ln_{\ell}(x)]^{-1}, \lb{3.34} \\
&y_{N,\eps} (x)=x^{-1/2}\prod_{k=1}^{N-1}[\ln_{k}(x)]^{-1/2}[\ln_{N}(x)]^{-1/2-\varepsilon/2},
	\lb{3.35}\\
& \wti y_{N,\eps} (x)= y_{N,\eps} (x) \int_{x_0}^x dt \, t^{- \alpha} 
y_{N,\eps} (t)^{-2}, 	
	\lb{3.36}\\ 
&\tau_{\alpha,N,\varepsilon}=-(d/dx)x^{\alpha}(d/dx)+q_{\alpha,N,\varepsilon}(x). 
	\lb{3.37}
\end{align}
At this point we claim (cf.\ Lemma \ref{lA.2} for details) that 
\begin{align}
(\tau_{\alpha,N,\varepsilon}\,y_{N,\eps} )(x)=0, \quad (\tau_{\alpha,N,\varepsilon}\,\wti y_{N,\eps} )(x)=0, 
 \quad \alpha \in (-\infty,2), \; N \in \bbN.     \lb{3.38}
\end{align} 
Since for $0 < \delta_N$ sufficiently small,
\begin{equation}
y_{N,\eps}  \in L^{2}((0,\delta_N);\,dx), \quad \alpha \in (-\infty,2), \; N \in \bbN,  
	\lb{3.39}
\end{equation}
and (utilizing $\alpha \in (- \infty,2)$)
\begin{equation}
\int_0^{\delta_N} dt \, t^{1 - \alpha} \Bigg(\prod_{k=1}^{N-1} \ln_{k}(t) \Bigg) [\ln_N (t)]^{1 + \varepsilon} < \infty,
	\lb{3.40}
\end{equation}
also 
\begin{equation}
\wti y_{N,\eps}  \in L^{2}((0,\delta_N);\,dx), \quad \alpha \in (-\infty,2), \; N \in \bbN.   
	\lb{3.41}
\end{equation}
Thus, $\tau_{\alpha,N,\varepsilon}$ is in the limit circle case and nonoscillatory (cf.\ \eqref{3.35}) at $x=0$.  
Moreover, since for $0 < \varepsilon$ and $0<x$ sufficiently small,
\begin{align}
\begin{split} 
q_{\alpha,N,\varepsilon}(x) &\geq [(3/4)-(\alpha/2)]x^{\alpha-2} 
- (1/2) (2 - \alpha) x^{\alpha-2}\sum_{j=1}^{N}\prod_{\ell=1}^{j}[\ln_{\ell}(x)]^{-1}    \\
&\quad - (\varepsilon/2) (2 - \alpha) x^{\alpha-2}\prod_{k=1}^{N}[\ln_{k}(x)]^{-1}, 
\end{split} 
\end{align}
condition \eqref{3.26} implies that $q(x) \leq \hatt Q_{\alpha,N,\eps}(x) \leq q_{\alpha,N,\varepsilon}(x)$ for 
$0 < \varepsilon$ and $0<x$ sufficiently small. Thus,   
Theorem \ref{t2.9} implies that also $\tau_{\alpha}$, $\alpha \in (-\infty,2)$, is in the limit circle case at $x=0$. 
\end{proof}

\begin{remark} \lb{r3.4}
Thus far we focusd on endpoint classifications at $x=0$. It is of course possible to address the limit point 
case at $x=\infty$, however, this is typically based on a markedly different technique and we provide an example next: For this purpose we abbreviate iterated logarithms for $0 < x$ sufficiently large by  
\begin{equation}
{\rm Ln}_1(x) = \ln(x), \quad {\rm Ln}_{j+1}(x) = \ln({\rm Ln}_{j}(x)), \quad j \in \bbN.
\end{equation}
Next, let $a \in \bbR$, $\alpha \in (-\infty,2]$, and 
consider $\tau_{\alpha}$ in \eqref{3.2} on the interval $[a,\infty)$. Suppose there exists $C \in (0,\infty)$ such that for 
$R \in (a,\infty)$ sufficiently large (rendering ${\rm Ln}_{\ell}(\dott)$, $1 \leq \ell \leq N$, in \eqref{3.43} strictly positive), and some $N \in \bbN$, 
\begin{equation}
q(x) \geq - C x^{2-\alpha} \prod_{k=1}^N [{\rm Ln}_{k}(x)]^2 \, \text{ for a.e.~$x \in [R,\infty)$.}     \lb{3.43} 
\end{equation}
Then $\tau_{\alpha}$, $\alpha \in (-\infty,2]$, is in the limit point case at $x = \infty$.

For the proof it suffices to choose 
\begin{equation}
p(x) = x^{\alpha}, \quad  
M(x) = x^{2 - \alpha} \prod_{k=1}^N [{\rm Ln}_{k}(x)]^2, \quad x \in [R,\infty),
\end{equation} 
and refer to \cite[Theorem~16, p.~1406]{DS88}. (For a three-coefficient analog of the latter, see 
\cite[Theorem~7.4.3, p.~148--149]{Ze05}.)

For additional results regarding the absence of $L^2$-solutions at $x = \infty$, implying the limit point case at $x=\infty$, we also refer to \cite{Se49}, \cite{Se52}, \cite{Se54}. 
\hfill $\diamond$
\end{remark}

\section{An Elementary Multi-Dimensional Application} \lb{s4}

In this section we briefly sketch an elementary multi-dimensional application of Theorem \ref{t3.1} in connection with the partial differential expression $- \Div (p(|\dott|) \nabla) + q(|\dott|)$. 

In $n$-dimensional spherical coordinates, the differential expression $- \Div (p(|\dott|) \nabla)$ on the $n$-dimensional ball $B_n(0;R) \subset \bbR^n$, $n \in \bbN$, $n \geq 2$, $R \in (0,\infty)$, assuming
\begin{equation}
1/p \in L^1((\varepsilon,R); dr), \, 0 < p \in AC([\varepsilon,R]) \, \text{ for all $\varepsilon > 0$},    \lb{4.1} 
\end{equation}
 takes the form
\begin{align}
- \Div p(|x|) \nabla = - r^{1-n} \f{\partial}{\partial r} \bigg(r^{n-1} p(r) \f{\partial}{\partial r}\bigg)  - \f{p(r)}{r^{2}} \Delta_{\bbS^{n-1}}, \quad x \in B_n(0;R) \backslash \{0\},
       \lb{4.2}
\end{align}
where $-  \Delta_{\bbS^{n-1}}$ denotes the Laplace--Beltrami operator associated with the $(n-1)$-dimensional unit sphere $\bbS^{n-1}$ in $\bbR^n$. When acting in $L^{2}(B_n(0;R))$, which in spherical coordinates can be written as 
$L^{2}(B_n(0;R); d^n x) \simeq L^{2}((0,R);r^{n-1}dr)\otimes L^{2}(\bbS^{n-1})$, \eqref{4.2}  becomes 
\begin{align}
- \Div p(|x|) \nabla = \bigg[- \f{d}{dr} p(r) \f{d}{dr} - \f{(n-1) p(r)}{r}\f{d}{dr}\bigg] \otimes I_{L^{2}(\bbS^{n-1})} - \f{p(r)}{r^{2}}\otimes  \Delta_{\bbS^{n-1}}
      \lb{4.3}
\end{align}
(with $I_{\cX}$ denoting the identity operator on $\cX$).  
The Laplace--Beltrami operator $-  \Delta_{\bbS^{n-1}}$ in $L^{2}(\bbS^{n-1})$, with domain $\dom(- \Delta_{\bbS^{n-1}}) = H^2\big(\bbS^{n-1}\big)$ (cf., e.g., \cite{BLP19}),  is known to be essentially self-adjoint and nonnegative on $C_0^{\infty}(\bbS^{n-1})$ (cf.\ \cite[Theorem~5.2.3]{Da89}). Recalling the  treatment in \cite[p.~160--161]{RS75}, one decomposes the space $L^{2}(\bbS^{n-1})$ into an infinite orthogonal sum, yielding
\begin{align}
\begin{split} 
L^{2}(B_n(0;R); d^n x) &\simeq L^{2}((0,R);r^{n-1}dr)\otimes L^{2}(\bbS^{n-1})    \\
& = \bigoplus\limits_{\ell=0}^{\infty}L^{2}((0,R);r^{n-1}dr)\otimes\cY_{\ell}^{n},   \lb{4.4}
\end{split} 
\end{align}
where $\cY_{\ell}^{n}$ is the eigenspace of $- \Delta_{\bbS^{n-1}}$ corresponding to the eigenvalue $\ell(\ell+n-2)$, $\ell \in \bbN_0$, as 
\begin{equation}
\sigma (- \Delta_{\bbS^{n-1}}) = \{\ell(\ell+n-2)\}_{\ell \in \bbN_0}.     \lb{4.5} 
\end{equation}
In particular, this results in 
\begin{align}
- \Div p(|x|) \nabla = \bigoplus\limits_{\ell=0}^{\infty} \bigg[- \f{d}{dr} p(r) \f{d}{dr} - \f{(n-1) p(r)}{r}\f{d}{dr} 
+ \f{\ell(\ell+n-2) p(r)}{r^{2}}\bigg] 
\otimes I_{\cY_{\ell}^{n}},       \lb{4.6}
\end{align}
in the space \eqref{4.4}.

To simplify matters, replacing the measure $r^{n-1}dr$ by $dr$ and simultaneously removing the term 
$(n-1) p(r) r^{-1} (d/dr)$, one introduces the unitary operator
\begin{align}
U_{n}=
\begin{cases}
L^{2}((0,R);r^{n-1}dr)\rightarrow L^{2}((0,R);dr),  \\[1mm] 
f(r)\mapsto r^{(n-1)/2}f(r),
\end{cases}      \lb{4.7}
\end{align}
under which \eqref{4.6} becomes
\begin{align}
- \Div p(|x|) \nabla &= \bigoplus\limits_{\ell=0}^{\infty} U_{n}^{-1}\bigg[-\f{d}{dr} p(r) \f{d}{dr}  + (n-1) \f{p'(r)}{2r}  
 \lb{4.8} \\
& \hspace*{1.9cm} + \{[(n-1)(n-3)/4] + \ell(\ell+n-2)\} \f{p(r)}{r^2}\bigg] U_{n} \otimes I_{\cY_{\ell}^{n}},     \no 
\end{align}
still acting in the space \eqref{4.4}. Thus, specializing to the case
\begin{equation}
p(r) = r^{\alpha}, \quad \alpha \in \bbR, \; r \in (0,R],    \lb{4.9} 
\end{equation} 
the self-adjoint Friedrichs $L^2$-realization, $H^{(0)}_{\alpha,F}$, of 
$- \Div |\dott|^{\alpha} \nabla$ in the space \eqref{4.4} then is of the form
\begin{equation}
H^{(0)}_{\alpha,F} = \bigoplus\limits_{\ell=0}^{\infty} U_{n}^{-1} h^{(0)}_{n,\ell,\alpha,F} \,U_{n} \otimes I_{\cY_{\ell}^{n}},
	\lb{4.10}
\end{equation}
where $h^{(0)}_{n,\ell,\alpha,F}$, $\ell \in \bbN_0$, represents the Friedrichs extension of the preminimal operator, 
$\dot h^{(0)}_{n,\ell,\alpha}$ in $L^2((0,R); dr)$, associated with the differential expression 
\begin{align}
\begin{split} 
\tau^{(0)}_{n,\ell,\alpha} = -\f{d}{dr} r^{\alpha} \f{d}{dr} + \f{[(n-1)(n-3 + 2 \alpha)/4] + \ell(\ell+n-2)}{r^{2-\alpha}},& \\ 
\alpha \in \bbR, \; n \in \bbN, \, n \geq 2, \; \ell \in \bbN_0, \; r \in (0,R],&     \lb{4.11} 
\end{split} 
\end{align} 
that is, 
\begin{align}
\dot h^{(0)}_{n,\ell,\alpha} = \tau^{(0)}_{n,\ell,\alpha}\big|_{C_0^{\infty}((0,R))}, \quad 
\alpha \in \bbR, \; n \in \bbN, \, n \geq 2, \; \ell \in \bbN_0,&       \lb{4.12}
\end{align}
in $L^2((0,R); dr)$. 

To explicitly describe $h^{(0)}_{n,\ell,\alpha,F}$ we next recall some results from \cite{GLN20} and \cite{GNS21}. For this purpose we introduce the differential expression 
\begin{align}
\begin{split}
\tau_{\b,\g} = \left[-\frac{d}{dx}x^\b\frac{d}{dx} +\frac{(2-\b)^2\g^2-(1-\b)^2}{4}x^{\b-2}\right],\\
\b \in \bbR,\; \g \in [0,\infty), \; x\in(0,R],     \lb{4.13} 
\end{split}
\end{align}
and recall that solutions to $\tau_{\b,\g} y(z, \dott) = z y(z,\dott)$  are given by (cf.\ \cite[No.~2.162, p.~440]{Ka61})
\begin{align}
y_{1,\b,\g}(z,x)&=x^{(1-\b)/2} J_{\gamma}\big(2z^{1/2} x^{(2-\b)/2}/(2-\b)\big),\quad \g \in [0,\infty),\\
y_{2,\b,\g}(z,x)&=\begin{cases}
x^{(1-\b)/2} J_{-\gamma}\big(2z^{1/2} x^{(2-\b)/2}/(2-\b)\big), & \g\notin\bbN_0,\\
x^{(1-\b)/2} Y_{\g}\big(2z^{1/2} x^{(2-\b)/2}/(2-\b)\big), & \g\in\bbN_0,
\end{cases} \; \g \in [0,\infty),   \\
& \hspace*{8.15cm} x \in (0,R].   \no 
\end{align}
where $J_{\nu}(\dott), Y_{\nu}(\dott)$ are the standard Bessel functions of order $\nu \in \bbR$ 
(cf.\ \cite[Ch.~9]{AS72}). Solutions for $z=0$ are particularly simple and we note that (non-normalized) principal and nonprincipal solutions $u_{0,\b,\gamma}(0, \dott)$ and $\hatt u_{0,\b,\gamma}(0, \dott)$ of $\tau_{\beta,\gamma} u = 0$ at $x=0$ are of the form 
\begin{align}
\begin{split} 
u_{0,\b,\gamma}(0, x) &= x^{[1-\b+(2-\b)\g]/2}, \quad \gamma \in [0,\infty),   \\
\hatt u_{0,\b,\gamma}(0, x) &= \begin{cases} x^{[1-\b-(2-\b)\g]/2}, & \gamma \in (0,\infty),     \lb{4.16} \\
x^{(1-\b)/2} \ln(1/x), & \gamma =0,  \end{cases}\\
&\hspace*{1.9cm} \beta \in \bbR,\; x \in (0,1),      \\
\hatt u_{0,2,\gamma}(0, x) &= x^{-1/2} \ln(1/x), \quad \gamma \in [0,\infty), \; x \in (0,1). 
\end{split} 
\end{align}
In particular, $\tau_{\b,\g}$, $\beta \in \bbR$, $\gamma \in [0,\infty)$, is nonoscillatory at $x=0$ and $x=R$, regular at $x=R$, and the following limit point/limit circle classification holds, 
\begin{equation}
\begin{cases}
\text{$\tau_{\b,\g}$ is in the limit point case at $x=0$ if $\beta \in [2,\infty)$, $\gamma \in [0,\infty)$} \\
\quad \text{and if $\beta \in (-\infty,2)$, $\gamma \in [1,\infty)$,} \\
\text{$\tau_{\b,\g}$ is in the limit circle case at $x=0$ if $\beta \in (-\infty,2)$, $\gamma \in [0,1)$,} \\
\text{$\tau_{\b,\g}$ is in the limit circle case at $x=R$ if $\beta \in \bbR$, $\gamma \in [0,\infty)$.}  \lb{4.17} 
\end{cases} 
\end{equation}
The preminimal, $ \dot T_{\beta,\gamma}$, and maximal $T_{\beta,\gamma, max}$, $L^2((0,R]; dx)$-realizations associated with $\tau_{\b,\g}$, $\beta \in \bbR$, $\gamma \in [0,\infty)$, are then given by 
\begin{align}
& \dot T_{\beta,\gamma} = \tau_{\b,\g}\big|_{C_0^{\infty}((0,R))},    \\
& (T_{\beta,\gamma, max} f)(x) = (\tau_{\beta,\gamma} f)(x) \, \text{ for a.e.~$x \in (0,R]$,}    \no \\
& \, f \in \dom(T_{\beta,\gamma, max}) = \big\{g \in L^2((0,R); dx) \, \big | \, 
g, g' \in AC_{loc}((\varepsilon,R]) \, \text{for all $0 < \varepsilon < R$};      \no \\  
& \hspace*{7.5cm} \tau_{\beta,\gamma} g \in L^2((0,R); dx) \big\},    
\end{align}

According to \cite{GLN20}, the generalized boundary values for $g \in \dom(T_{\beta, \gamma,max})$ at $x=0$ in the limit circle case at $x=0$ (i.e., if $\beta \in (-\infty,2)$, $\gamma \in [0,1)$) are of the form
\begin{align}
\begin{split} 
\wti g(0) &= \begin{cases} \lim_{x \downarrow 0} g(x)\big/\big[x^{[1-\b-(2-\b)\g]/2}\big], & 
\gamma \in (0,1), \\[1mm]
\lim_{x \downarrow 0} g(x)\big/\big[x^{(1-\b)/2} \ln(1/x)\big], & \gamma =0, 
\end{cases} 
\end{split} \\
\wti g^{\, \prime} (0) 
&= \begin{cases} \lim_{x \downarrow 0} \big[g(x) - \wti g(0) x^{[1-\b-(2-\b)\g]/2}\big] 
\big/\big[x^{[1-\b+(2-\b)\g]/2}\big], 
& \hspace{-.2cm}   \gamma \in (0,1),     \\[1mm]
\lim_{x \downarrow 0} \big[g(x) - \wti g(0) x^{(1-\b)/2} \ln(1/x)\big] \big/\big[x^{(1-\b)/2}\big], 
& \hspace{-.2cm} \gamma =0.
\end{cases}
\end{align}
Since $\tau_{\b,\g}$ is regular at $x=R$, the standard boundary values for $g \in \dom(T_{\beta, \gamma,max})$ at $x=R$ are of the standard form $g(R), g'(R)$.

The closure of $\dot T_{\beta,\gamma}$ in $L^2((0,R); dx)$, that is, the minimal operator, $T_{\beta,\gamma, min}$, associated with 
$\tau_{\b,\g}$, is then given by
\begin{align} 
& (T_{\beta,\gamma, min} f)(x) = (\tau_{\beta,\gamma} f)(x) \, \text{ for a.e.~$x \in (0,R]$, $\beta \in \bbR$, 
$\gamma \in [0,\infty)$,}   \no \\
& \, f \in \dom(T_{\beta,\gamma, min}) = \big\{g \in \dom(T_{\beta,\gamma, max}) \, \big | \, 
\wti g (0) = \wti g^{\, \prime} (0) = 0, \, g(R) = g'(R) = 0 \big\},    \no \\
& \hspace*{7.3cm} \beta \in (-\infty,2), \; \gamma \in [0,1),      \\ 
& \, f \in \dom(T_{\beta,\gamma, min}) = \big\{g \in \dom(T_{\beta,\gamma, max}) \, \big | \, 
g(R) = g'(R) = 0 \big\},     \\
& \hspace*{1.52cm} \beta \in (-\infty,2), \; \gamma \in [1,\infty), \, \text{ or, } \, \beta \in [2,\infty), \; \gamma \in [0,\infty),  
\no 
\end{align}
and the Friedrichs extension, $T_{\beta, \gamma,F}$, of $T_{\beta,\gamma, min}$ (and $\dot T_{\beta,\gamma}$) is characterized by (cf.\ \cite{Ka78}, \cite{NZ92}, \cite{Ro85}), 
\begin{align}
& (T_{\beta,\gamma, F} f)(x) = (\tau_{\beta,\gamma} f)(x) \, \text{ for a.e.~$x \in (0,R]$, $\beta \in \bbR$, 
$\gamma \in [0,\infty)$,}   \no \\
& \, f \in \dom(T_{\beta,\gamma, F}) = \big\{g \in \dom(T_{\beta,\gamma, max}) \, \big | \, 
\wti g (0) =0, \, g(R) = 0\big\},   \lb{4.24}  \\
& \hspace*{5.8cm} \beta \in (-\infty,2), \; \gamma \in [0,1),     \no \\ 
& \, f \in \dom(T_{\beta,\gamma, F}) = \big\{g \in \dom(T_{\beta,\gamma, max}) \, \big | \, g(R) = 0\big\},   \lb{4.25}  \\
& \hspace*{-.7mm} \beta \in (-\infty,2), \; \gamma \in [1,\infty), \, \text{ or, } \, \beta \in [2,\infty), \; \gamma \in [0,\infty). 
\no
\end{align}

Returning to $\tau^{(0)}_{n,\ell,\alpha}$, and hence comparing 
\begin{equation}
(n-1)(n-3 + 2 \alpha) + 4 \ell (\ell + n -2) \, \text{ with } \, (2-\alpha)^2 \gamma^2 -(1 - \alpha)^2 \, \text{ for } \, \alpha \in \bbR \backslash \{0\},
\end{equation}
and treating the case $\alpha = 2$ separately, 
an application of \eqref{4.13}--\eqref{4.25} then yields the following facts for its limit point/limit circle classification, for the maximal operator $h^{(0)}_{n,\ell,\alpha,max}$ associated with 
$\tau^{(0)}_{n,\ell,\alpha}$, and the Friedrichs extension $h^{(0)}_{n,\ell,\alpha,F}$ of $\dot h^{(0)}_{n,\ell,\alpha}$ in $L^2((0,R); dx)$. First, upon identifying
\begin{align}
& \beta = \alpha \in \bbR \backslash \{2\}, \quad \gamma= \gamma_{\alpha}  
= \big[(2 - \alpha - n)^2 + 4 \ell (\ell + n - 2)\big]^{1/2} \big/ |2 - \alpha| \in [0,\infty), 
\end{align}
more precisely, 
\begin{align}
\begin{split}
& \alpha > 2, \quad \gamma_{\alpha} \in [1,\infty),    \\
& \alpha < 2, \quad \gamma_{\alpha} \in [0,\infty), 
\end{split}
\end{align}
$\tau^{(0)}_{n,\ell,\alpha}$, $\alpha \in \bbR$, is nonoscillatory at $r=0$ and $r=R$, and regular at $r=R$. In addition,   
\begin{equation}
\begin{cases}
\text{$\tau^{(0)}_{n,\ell,\alpha}$ is in the limit point case at $r=0$ if $\alpha \in (2,\infty)$, 
$\gamma_{\alpha} \in [0,\infty)$,} \\
\quad \text{if $\alpha \in (-\infty,2)$, $\gamma_{\alpha} \in [1,\infty)$, and if $\alpha = 2$,} \\
\text{$\tau^{(0)}_{n,\ell,\alpha}$ is in the limit circle case at $r=0$ if $\alpha \in (-\infty,2)$, 
$\gamma_{\alpha} \in [0,1)$,} \\
\text{$\tau^{(0)}_{n,\ell,\alpha}$ is in the limit circle case at $r=R$ for all $\alpha \in \bbR$,}  
\end{cases}       \lb{4.29} 
\end{equation}
and hence 
\begin{equation}
\begin{cases}
\text{$\tau^{(0)}_{n,\ell,\alpha}$ is in the limit point case at $r=0$ if and only if} \\
\quad \text{$\alpha \in [2 - (n/2) - (2/n)\ell(\ell + n - 2),\infty)$,}    \\
\text{$\tau^{(0)}_{n,\ell,\alpha}$ is in the limit circle case at $r=0$ if and only if} \\ 
\quad \text{$\alpha \in (-\infty, 2 - (n/2) - (2/n)\ell(\ell + n - 2))$,} \\
\text{$\tau^{(0)}_{n,\ell,\alpha}$ is in the limit circle case at $r=R$ for all $\alpha \in \bbR$.}  \lb{4.30} 
\end{cases}
\end{equation}
Moreover, the underlying maximal operator is of the form 
\begin{align}
& \big(h^{(0)}_{n,\ell,\alpha,max} f\big)(r) = \big(\tau^{(0)}_{n,\ell,\alpha} f\big)(r)  \, \text{ for a.e.~$ r \in (0,R]$, $\alpha \in \bbR$,}    \no \\
& \, f \in \dom(h^{(0)}_{n,\ell,\alpha,max}) = \big\{g \in L^2((0,R);dr) \, \big| \, g, g' \in AC([\varepsilon,R]) 
\, \text{for all $\varepsilon \in (0,R)$};     \no \\
& \hspace*{6.5cm} \big(\tau^{(0)}_{n,\ell,\alpha} f\big)(r) \in L^2((0,R);dr)\big\},  \lb{4.31}
\end{align}
and the corresponding Friedrichs extension of $\dot h^{(0)}_{n,\ell,\alpha}$ is given by 
\begin{align}
& \big(h^{(0)}_{n,\ell,\alpha,F} f\big)(r) = \big(\tau^{(0)}_{n,\ell,\alpha} f\big)(r)  \, \text{ for a.e.~$ r \in (0,R]$, $\alpha \in \bbR$,}    \no \\
& \, f \in \dom(h^{(0)}_{n,\ell,\alpha,F}) = \big\{g \in \dom(h^{(0)}_{n,\ell,\alpha,max}) \, \big| \, 
\wti g (0) = 0, \, g(R) =0 \big\},  \lb{4.32} \\
& \hspace*{3.6cm} \alpha \in (- \infty, 2 - (n/2) - (2/n) \ell (\ell + n - 2)),    \no \\
& \, f \in \dom(h^{(0)}_{n,\ell,\alpha,F}) = \big\{g \in \dom(h^{(0)}_{n,\ell,\alpha,max}) \, \big| \, 
g(R) =0 \big\},  \lb{4.33} \\
& \hspace*{2.43cm} \alpha \in [2 - (n/2) - (2/n) \ell (\ell + n - 2), \infty).   \no
\end{align}
Here the boundary value $\wti g(0)$ associated with $g \in \dom\big(h^{(0)}_{n,\ell,\alpha,max}\big)$, 
$n \in \bbN$, $n \geq 2$, $\ell \in \bbN_0$, $\alpha \in (- \infty,2)$, is now given by 
\begin{align}
\wti g(0) &= \begin{cases} \lim_{x \downarrow 0} g(x)\big/\big[x^{[1-\alpha-(2-\alpha)\gamma_{\alpha}]/2}\big], & 
\gamma_{\alpha} \in (0,1), \\[1mm]
\lim_{x \downarrow 0} g(x)\big/\big[x^{(1-\alpha)/2} \ln(1/x)\big], & \gamma_{\alpha} =0, 
\end{cases}     \no \\ 
& \hspace*{5cm} \alpha \in (-\infty,2),     \no \\
&= \begin{cases} \lim_{x \downarrow 0} g(x)\big/\big[x^{\{1-\alpha-[(2-\alpha-n)^2 + 4\ell(\ell+n-2)]^{1/2}\}/2}\big],  \\
\quad \alpha \in (-\infty, 2 - (n/2) - (2/n)\ell(\ell+n-2)), \; \ell \in \bbN_0, \\[1mm]
\lim_{x \downarrow 0} g(x)\big/\big[x^{(1-\alpha)/2} \ln(1/x)\big], \;\; \alpha = 2-n, \; \ell=0. 
\end{cases} 
\end{align}

Without going into details, we note that utilizing the transformation \eqref{4.7}, and invoking results of Kalf \cite{Ka72}, the operator $H^{(0)}_{\alpha,F}$ is of the following form,
\begin{align}
& \big(H^{(0)}_{\alpha,F} \psi\big)(x) = - (\Div |x|^{\alpha} \nabla \psi)(x), \quad x \in B_n(0;R) \backslash \{0\},   \no \\
& \, \psi \in \dom\big(H^{(0)}_{\alpha}\big) = \big\{\phi \in \dom\big(H^{(0)}_{\alpha,max}\big) \, \big| \, | \dott |^{\alpha} 
(\nabla \phi) \in L^2(B_n(0;R); d^n x);    \\
& \hspace*{3.05cm} \lim_{r\uparrow R} \int_{\bbS^{n-1}} d^{n-1} \omega \, |\phi(r \omega)|^2 = 0,    \no \\ 
& \hspace*{3.1cm} \text{and if and only if $n < 2 - \alpha$,} \, \lim_{r\downarrow 0} \int_{\bbS^{n-1}} d^{n-1} \omega \, |\phi(r \omega)|^2 = 0 \big\}     \no 
\end{align}
(with $d^{n-1} \omega$ the surface measure on $\bbS^{n-1}$), where
\begin{align}
& \big(H^{(0)}_{\alpha, max} \psi\big)(x) = - (\Div |x|^{\alpha} \nabla \psi)(x), \quad x \in B_n(0;R) \backslash \{0\},  \no \\
& \, \psi \in \dom\big(H^{(0)}_{\alpha, max}\big) = \big\{\phi \in L^2(B_n(0;R); d^n x) \, \big| \, 
\phi \in H^2_{\loc}(B_n(0;R)\backslash\{0\});        \\
& \hspace*{5.8cm} \Div | \dott |^{\alpha} \nabla \phi \in L^2(B_n(0;R); d^n x)\big\}.    \no 
\end{align} 
We remark that the boundary condition at $x=R$ (and at $x=0$ if and only if $n < 2 - \alpha$) has to be imposed on a distinguished representative of $\phi$ for which the restriction to the $(n-1)$-dimensional sphere $\bbS^{n-1}$ exists as a square integrable function (see the discussion in \cite[Remark~3]{Ka72}).

We conclude these considerations by adding an additional potential term $q$ in accordance with inequalities \eqref{3.4}, \eqref{3.5}. For this purpose we assume that for all $\eta \in (0,R)$, 
\begin{equation}
q \in L^1((\eta, R); dr) \, \text{ is real-valued a.e.~on $(0,R)$,}     \lb{4.37} 
\end{equation}
and introduce for $n \in \bbN$, $n \geq 2$, $\ell \in \bbN_0$, 
\begin{equation}
\tau_{n,\ell,\alpha} = \tau^{(0)}_{n,\ell,\alpha} + q(r) \, \text{ for a.e.~$r \in (0,R)$, $\alpha \in \bbR$,}
\end{equation}
and the following $L^2((0,R); dr)$-realization of $\tau_{n,\ell,\alpha}$,
\begin{align}
& \big(h_{n,\ell,\alpha} f\big)(r) = \big(\tau_{n,\ell,\alpha} f\big)(r) \, \text{ for a.e.~$ r \in (0,R]$, $\alpha \in \bbR$,}    
\no \\
& \, f \in \dom(h_{n,\ell,\alpha}) = \big\{g \in L^2((0,R);dr) \, \big| \, g, g' \in AC([\varepsilon,R]) 
\, \text{for all $\varepsilon \in (0,R)$};      \\
& \hspace*{5.5cm} g(R) = 0; \, \big(\tau^{(0)}_{n,\ell,\alpha} f\big)(r) \in L^2((0,R);dr)\big\}.  \no
\end{align}

\begin{theorem} \lb{t4.1} 
Assume $n \in \bbN$, $n\geq 2$, and \eqref{4.37}. \\[1mm] 
$(i)$ If $\alpha \in \bbR$, $\ell \in \bbN_0$, and for a.e.~$0<r$ sufficiently small, 
\begin{equation}
q(r) \geq - \{[n(n-4+2 \alpha)/4] + \ell(\ell+n-2)\} r^{\alpha-2},     \lb{4.40} 
\end{equation}
then $\tau_{n,\ell,\alpha}$ is nonoscillatory and in the limit point case at $r=0$. \\[1mm]
$(ii)$ If\,\footnote{Again, only $\alpha \in (- \infty,2)$ can improve on item $(i)$.} $\alpha\in (-\infty,2)$, 
$\ell \in \bbN_0$, and there exist $N\in\bbN$ and $\varepsilon>0$, such 
that for a.e.~$0<r$ sufficiently small $($depending on $N$ and $\varepsilon$$)$,
\begin{align}
\begin{split} 
q(r) & \geq - \{[n(n-4+2 \alpha)/4] + \ell(\ell+n-2)\} r^{\alpha-2}    \lb{4.41} \\
& \quad - (1/2) (2 - \alpha) r^{\alpha-2} 
\sum_{j=1}^{N}\prod_{\ell=1}^{j}[\ln_{\ell}(r)]^{-1} + [(3/4) + \varepsilon] r^{\alpha-2}[\ln_{1}(x)]^{-2}, 
\end{split} 
\end{align}
then $\tau_{n,\ell,\alpha}$ is nonoscillatory and in the limit point case at $r=0$. \\[1mm] 
$(iii)$ Assuming inequality \eqref{4.40} $($for $\alpha \in \bbR$$)$ or \eqref{4.41} $($for $\alpha \in (-\infty,2)$$)$ holds for $\ell=0$, then the operator
\begin{align}
H_{\alpha}=\bigoplus\limits_{\ell\in\bbN_{0}}U_{n}^{-1} h_{n,\ell,\alpha}U_{n}       \lb{4.42}
\end{align}
is self-adjoint in $L^{2}(B_{n}(0;R); d^n x)$.   
\end{theorem}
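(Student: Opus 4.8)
The plan is to reduce parts $(i)$ and $(ii)$ to the one–dimensional Theorem \ref{t3.1}, and to deduce part $(iii)$ from a monotonicity observation in $\ell$ together with the standard self-adjointness dichotomy recalled in \eqref{2.5}.

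First I would recast $\tau_{n,\ell,\alpha}$ as a two-coefficient expression of the form treated in Section \ref{s3}. Writing $c_{n,\ell,\alpha} = [(n-1)(n-3+2\alpha)/4] + \ell(\ell+n-2)$ for the coefficient of $r^{\alpha-2}$ in $\tau^{(0)}_{n,\ell,\alpha}$ (cf.\ \eqref{4.11}, using $1/r^{2-\alpha} = r^{\alpha-2}$), one has
\[
\tau_{n,\ell,\alpha} = -(d/dr) r^{\alpha}(d/dr) + \wti q(r), \qquad \wti q(r) = c_{n,\ell,\alpha}\, r^{\alpha-2} + q(r).
\]
The crux is the elementary identity
\[
(3/4) - (\alpha/2) - c_{n,\ell,\alpha} = -\big\{[n(n-4+2\alpha)/4] + \ell(\ell+n-2)\big\},
\]
obtained by expanding $(n-1)(n-3+2\alpha) = n^2 - 4n + 3 + 2\alpha n - 2\alpha$ and cancelling. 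Granting it, hypothesis \eqref{4.40} is exactly the assertion $\wti q(r) \geq [(3/4)-(\alpha/2)] r^{\alpha-2}$ for a.e.\ small $r$, so Theorem \ref{t3.1}$(i)$ applies to $\tau_{n,\ell,\alpha}$ and yields part $(i)$. Since the constant shift $c_{n,\ell,\alpha}\, r^{\alpha-2}$ alters only the leading $r^{\alpha-2}$-coefficient and leaves the logarithmic sum and the final logarithmic term of \eqref{4.41} unchanged, subtracting it converts \eqref{4.41} term by term into hypothesis \eqref{3.5} for $\wti q$, whence Theorem \ref{t3.1}$(ii)$ gives part $(ii)$.

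For part $(iii)$ I would note that $c_{n,\ell,\alpha}$ depends on $\ell$ only through $\ell(\ell+n-2)$, which for $n \geq 2$ is nonnegative, strictly increasing in $\ell \in \bbN_0$, and vanishes precisely at $\ell=0$. Hence $-c_{n,\ell,\alpha} \leq -c_{n,0,\alpha}$, so the lower bound on $q$ in \eqref{4.40} (respectively \eqref{4.41}) is strongest at $\ell=0$; thus the $\ell=0$ hypothesis forces \eqref{4.40} (or \eqref{4.41}) for every $\ell \in \bbN_0$, and parts $(i)$, $(ii)$ place $\tau_{n,\ell,\alpha}$ in the limit point case at $r=0$ for all $\ell$. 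Because $1/r^{\alpha}$ and $q$ are integrable near $R$ while $r^{\alpha-2}$ is bounded there, $r=R$ is a regular endpoint; consequently the realization $h_{n,\ell,\alpha}$, obtained from the maximal operator by the single separated condition $g(R)=0$ and no condition at the limit point endpoint $r=0$, is self-adjoint for each $\ell$, since for an expression that is limit point at one endpoint and regular at the other exactly one separated boundary condition at the regular endpoint produces a self-adjoint realization (cf.\ \eqref{2.5} and the surrounding discussion). Unitary equivalence under $U_n$ from \eqref{4.7} preserves self-adjointness, and the orthogonal direct sum of self-adjoint operators is self-adjoint; in view of the decomposition \eqref{4.4} this gives self-adjointness of $H_{\alpha}$ in \eqref{4.42} on $L^2(B_n(0;R); d^n x)$.

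The computations are routine once Theorem \ref{t3.1} is in hand; the one place demanding care is the endpoint bookkeeping in $(iii)$. Specifically, one must confirm that the single boundary condition already built into $\dom(h_{n,\ell,\alpha})$ at the regular endpoint $r=R$, together with the limit point property at $r=0$ valid for \emph{all} $\ell$ (which is precisely what the $\ell=0$ hypothesis secures via the monotonicity of $\ell(\ell+n-2)$), is exactly what self-adjointness of each summand requires, so that the direct sum closes up to a self-adjoint operator with no further conditions imposed.
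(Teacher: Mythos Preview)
Your proof is correct and follows essentially the same approach as the paper. For $(i)$ and $(ii)$ you reduce to Theorem \ref{t3.1} via the algebraic identity $(3/4)-(\alpha/2)-c_{n,\ell,\alpha}=-\{[n(n-4+2\alpha)/4]+\ell(\ell+n-2)\}$, exactly as the paper does; for $(iii)$ the paper simply states that self-adjointness of $h_{n,0,\alpha}$ implies that of every $h_{n,\ell,\alpha}$, whereas you spell out the underlying monotonicity in $\ell(\ell+n-2)$ and the endpoint bookkeeping, which is a welcome elaboration of the same argument.
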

\begin{proof}
Items $(i)$ and $(ii)$ are an immediate consequence of Theorem \ref{t3.1} since, for instance, inequality \eqref{4.40} in the context $N=0$ is equivalent to
\begin{equation}
\{[(n-1)(n-3+2\alpha)/4] + \ell (\ell+n-2)\} r^{\alpha-2} + q(r) \geq [(3/4) - (\alpha/2)] r^{\alpha-2}
\end{equation}
for $0 < r$ sufficiently small (cf.\ \eqref{3.4}), and analogously in the context of \eqref{4.41} (cf.\ \eqref{3.5}) for $N \in \bbN$. Item $(iii)$ holds since 
$h_{n,0,\alpha}$ self-adjoint in $L^2((0,R); dr)$ implies that $h_{n,\ell,\alpha}$ is self-adjoint in $L^2((0,R); dr)$ for all 
$\ell \in \bbN_0$. 
\end{proof}

In the case $N=0$, self-adjointness of $H_{\alpha}$ is familiar from multi-dimensional results in Kalf and Walter 
\cite{KW72} (with strict inequality in the analog of \eqref{4.40} for $\ell=0$); in this context see also \cite{KSWW75}, \cite{Sc72} for $\alpha =0$.

\appendix
\section{More Details in Connection with Theorems \ref{t3.1} and \ref{t3.3}} \lb{sA}
\renewcommand{\theequation}{A.\arabic{equation}}
\renewcommand{\thetheorem}{A.\arabic{theorem}}
\setcounter{theorem}{0} \setcounter{equation}{0}

In this appendix we elaborate on the proofs of Theorem \ref{t3.1} and \ref{t3.3} by sketching the proofs of the assertions in \eqref{3.14} and \eqref{3.38}.  

We begin with the limit point case discussed in Theorem \ref{t3.1}. 

\begin{lemma}	\lb{lA.1}
Let the assumptions of Theorem \ref{t3.1} be satisfied, and $q_{\alpha,N}(x)$, $\tau_{\alpha,N}$, and $y_N(x)$ be as in \eqref{3.11}--\eqref{3.13}.  Then, for all $N\in\bbN$,
\begin{align}
(\tau_{\alpha,N}\,y_N)(x)=0.      \lb{A.1}
\end{align}
\end{lemma}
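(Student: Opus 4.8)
The plan is to verify \eqref{A.1} by a Riccati reduction rather than by brute-force differentiation of $y_N$. Since $y_N$ in \eqref{3.12} is strictly positive for $0 < x$ sufficiently small, I set $v = y_N'/y_N$ and invoke the standard fact that a strictly positive $y$ solves $-(x^{\alpha}y')' + qy = 0$ if and only if $q = (x^{\alpha}v)' + x^{\alpha}v^2$. Thus it suffices to show that the coefficient $q_{\alpha,N}$ in \eqref{3.11} coincides with $(x^{\alpha}v)' + x^{\alpha}v^2$, which keeps the bookkeeping free of the factor $y_N$.

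The computational engine is the elementary formula $\frac{d}{dx}\ln_j(x) = -x^{-1}\prod_{\ell=1}^{j-1}[\ln_\ell(x)]^{-1}$ (empty product read as $1$), obtained by induction on $j$ from $\ln_{j+1}(x) = \ln(\ln_j(x))$ together with the chain rule. Abbreviating $P_k(x) = \prod_{\ell=1}^{k}[\ln_\ell(x)]^{-1}$ and $S(x) = \sum_{k=1}^{N}P_k(x)$, logarithmic differentiation of $y_N$ gives $v = \frac{1}{2x}(S - 1)$, while the same formula yields $P_k' = x^{-1}P_k\sum_{\ell=1}^{k}P_\ell$ and hence $S' = x^{-1}\sum_{k=1}^{N}\sum_{\ell=1}^{k}P_k P_\ell$. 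Feeding these into $(x^{\alpha}v)' + x^{\alpha}v^2$ produces $x^{\alpha-2}$ times a bracket that I would sort by its degree in the quantities $P_\ell$: the degree-zero part collapses to $\frac34 - \frac\alpha2$, and the part linear in $S$ collapses to $-\frac12(2-\alpha)S$, reproducing the first two terms of \eqref{3.11} verbatim.

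The quadratic part equals $\frac12\sum_{k=1}^{N}\sum_{\ell=1}^{k}P_kP_\ell + \frac14 S^2$, and this is where the only genuine manipulation lies. Writing $A = \sum_{k}P_k^2$ and $B = \sum_{\ell<k}P_\ell P_k$, one has $\sum_{k}\sum_{\ell\le k}P_kP_\ell = A + B$ and $S^2 = A + 2B$, so the quadratic part simplifies to $\frac34 A + B$. Here $\frac34 A = \frac34\sum_{j=1}^{N}\prod_{\ell=1}^{j}[\ln_\ell(x)]^{-2}$ is exactly the third term of \eqref{3.11}. The main obstacle I anticipate is the reindexing of the off-diagonal sum $B$: for $\ell = j < k = m$ one factors out the common head to get $P_jP_m = \prod_{\ell=1}^{j}[\ln_\ell(x)]^{-2}\prod_{p=j+1}^{m}[\ln_p(x)]^{-1}$, whence $B = \sum_{j=1}^{N-1}\prod_{\ell=1}^{j}[\ln_\ell(x)]^{-2}\sum_{m=j+1}^{N}\prod_{p=j+1}^{m}[\ln_p(x)]^{-1}$, precisely the fourth term of \eqref{3.11}. (For $N=1$ the sums over $j$ are empty and $B=0$, consistent with the empty-index conventions.) With this last sum matched, \eqref{A.1} follows, and every remaining step is a mechanical coefficient comparison.
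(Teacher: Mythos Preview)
Your proof is correct and follows essentially the same route as the paper. The paper computes $y_N^{-1}(x^{\alpha}y_N')'$ directly by expanding $y_N'$ via the product rule and then dividing out the factor $y_N$; your Riccati substitution $v=y_N'/y_N$ is just the identity $y^{-1}(x^{\alpha}y')'=(x^{\alpha}v)'+x^{\alpha}v^2$ applied up front, which keeps the $y_N$ factors from ever appearing. Both arguments land on the same quadratic expression and both reduce it via the same combinatorial identity---the paper states it as (A.7), while you phrase it as $\sum_{k}\sum_{\ell\le k}P_kP_\ell=A+B$ together with the factorization $P_jP_m=\prod_{\ell\le j}[\ln_\ell]^{-2}\prod_{j<p\le m}[\ln_p]^{-1}$---to recover the third and fourth terms of $q_{\alpha,N}$.
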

\begin{proof}
One observes,\footnote{\,See footnote \ref{f1}.}  
\begin{align}
& \big(\ln_{N}(x)\big)'=-x^{-1}\prod_{k=1}^{N-1}[\ln_{k}(x)]^{-1},    \lb{A.2} \\
& \left([\ln_{N}(x)]^{-1/2}\right)'=\f{1}{2}x^{-1} \Bigg(\prod_{k=1}^{N-1}[\ln_{k}(x)]^{-1}\Bigg)[\ln_{N}(x)]^{-3/2},   \lb{A.3} \\
& \left(\prod_{\ell=1}^{N}[\ln_{\ell}(x)]^{-1/2}\right)'=\f{1}{2}x^{-1}\prod_{k=1}^{N}[\ln_{k}(x)]^{-1/2}
\sum_{j=1}^{N}\prod_{\ell=1}^{j}[\ln_{\ell}(x)]^{-1},   \lb{A.4} \\ 
& \Bigg(\prod_{k=1}^{N}[\ln_{k}(x)]^{-1}\Bigg)' 
= x^{-1}\prod_{k=1}^{N}[\ln_{k}(x)]^{-1}\sum_{j=1}^{N}\prod_{\ell=1}^{j}[\ln_{\ell}(x)]^{-1},     \lb{A.5}
\end{align}
and hence verifies 
\begin{align}
& \f{1}{y_{N}(x)}\big(x^{\alpha}y_{N}'(x)\big)'\no\\
&\quad=\f{1}{y_{N}(x)}\f{d}{dx}\Bigg[-\f{1}{2}x^{\alpha-3/2}\prod_{k=1}^{N}[\ln_{k}(x)]^{-1/2}     \no\\
&\qquad+\f{1}{2}x^{\alpha-3/2}\prod_{k=1}^{N}[\ln_{k}(x)]^{-1/2}\sum_{j=1}^{N}\prod_{\ell=1}^{j}[\ln_{\ell}(x)]^{-1}\Bigg] 
\no\\
&\quad=\f{1}{y_{N}(x)}\f{d}{dx}\left[\f{1}{2}x^{\alpha-3/2}\prod_{k=1}^{N}[\ln_{k}(x)]^{-1/2}\Bigg(-1+\sum_{j=1}^{N} 
\prod_{\ell=1}^{j}[\ln_{\ell}(x)]^{-1}\Bigg)\right]\no\\
&\quad=\f{1}{y_{N}(x)}\Bigg[\f{1}{2}\left(\alpha-\f{3}{2}\right)x^{\alpha-5/2}\prod_{k=1}^{N}[\ln_{k}(x)]^{-1/2}\Bigg(-1+ \sum_{j=1}^{N}\prod_{\ell=1}^{j}[\ln_{\ell}(x)]^{-1}\Bigg)\no\\
&\qquad+\f{1}{4}x^{\alpha-5/2}\prod_{k=1}^{N}[\ln_{k}(x)]^{-1/2}\sum_{j=1}^{N}\prod_{\ell=1}^{j}[\ln_{\ell}(x)]^{-1}\Bigg(-1+ \sum_{m=1}^{N}\prod_{p=1}^{m}[\ln_{p}(x)]^{-1}\Bigg)\no\\
&\qquad +\f{1}{2}x^{\alpha-5/2}\prod_{k=1}^{N}[\ln_{k}(x)]^{-1/2}\sum_{j=1}^{N}\prod_{\ell=1}^{j}[\ln_{\ell}
(x)]^{-1}\sum_{m=1}^{j}\prod_{p=1}^{m}[\ln_{p}(x)]^{-1}\Bigg]      \no 
\end{align}
\begin{align}
& \hspace*{-2cm} \quad =\left(\f{3}{4}-\f{\alpha}{2}\right)x^{\alpha-2}+\left(\f{\alpha}{2}-\f{3}{4}\right)x^{\alpha-2}\sum_{j=1}^{N} \prod_{\ell=1}^{j}[\ln_{\ell}(x)]^{-1}   \no\\
&  \hspace*{-2cm} \qquad 
-\f{1}{4}x^{\alpha-2}\sum_{j=1}^{N}\prod_{\ell=1}^{j}[\ln_{\ell}(x)]^{-1}  
+\f{1}{4}x^{\alpha-2}\Bigg(\sum_{j=1}^{N}\prod_{\ell=1}^{j}[\ln_{\ell}(x)]^{-1}\Bigg)^2   \no \\
&  \hspace*{-2cm} \qquad + \f{1}{2}
x^{\alpha-2}\sum_{j=1}^{N}\prod_{\ell=1}^{j}[\ln_{\ell}(x)]^{-1}\sum_{m=1}^{j}\prod_{p=1}^{m}[\ln_{p}(x)]^{-1}.    \lb{A.6}
\end{align}
Using the equality
\begin{align} \no
\sum_{j=1}^{N}\prod_{\ell=1}^{j}[\ln_{\ell}(x)]^{-1}\sum_{m=1}^{j}\prod_{p=1}^{m}[\ln_{p}(x)]^{-1}=&
\sum_{j=1}^{N-1}\prod_{\ell=1}^{j}[\ln_{\ell}(x)]^{-2}\sum_{m=j+1}^{N} 
\prod_{p=j+1}^{m}[\ln_{p}(x)]^{-1}     \lb{A.7} \\ 
& +\sum_{j=1}^{N}\prod_{\ell=1}^{j}[\ln_{\ell}(x)]^{-2},
\end{align}
one rewrites the last line in \eqref{A.6} in the form
\begin{align} 
& \f{1}{4}x^{\alpha-2}\Bigg(\sum_{j=1}^{N}\prod_{\ell=1}^{j}[\ln_{\ell}(x)]^{-1}\Bigg)^{2}+\f{1}{2}x^{\alpha-2}\sum_{j=1}^{N}\prod_{\ell=1}^{j}[\ln_{\ell}(x)]^{-1}\sum_{m=1}^{j}\prod_{p=1}^{m}[\ln_{p}(x)]^{-1} \no\\
& \quad =\f{1}{4}x^{\alpha-2}\sum_{j=1}^{N-1}\prod_{\ell=1}^{j}[\ln_{\ell}(x)]^{-1}\sum_{m=j+1}^{N}\prod_{p=1}^{m}[\ln_{p}(x)]^{-1}      \no \\ 
& \qquad
+\f{3}{4}x^{\alpha-2}\sum_{j=1}^{N}\prod_{\ell=1}^{j}[\ln_{\ell}(x)]^{-1}\sum_{m=1}^{j}
\prod_{p=1}^{m}[\ln_{p}(x)]^{-1}     \lb{A.8} \\
& \quad =x^{\alpha-2}\sum_{j=1}^{N-1}\prod_{\ell=1}^{j}[\ln_{\ell}(x)]^{-2}\sum_{m=j+1}^{N}\prod_{p=j+1}^{m}[\ln_{p}(x)]^{-1} +\f{3}{4}x^{\alpha-2}\sum_{j=1}^{N}\prod_{\ell=1}^{j}[\ln_{\ell}(x)]^{-2}.    \no 
\end{align}
Taking into account \eqref{3.11}, \eqref{A.6} and \eqref{A.8}, one derives the result 
\begin{align}
\f{1}{y_{N}(x)}\big(x^{\alpha}y_{N}'(x)\big)'=& [(3/4)-(\alpha/2)] 
x^{\alpha-2} - (1/2) (2- \alpha) x^{\alpha-2}\sum_{j=1}^{N}\prod_{\ell=1}^{j}[\ln_{\ell}(x)]^{-1}     \no\\ 
& + (3/4) x^{\alpha-2}\sum_{j=1}^{N}\prod_{\ell=1}^{j}[\ln_{\ell}(x)]^{-2} = q_{\al, N}(x)   \no \\
& + x^{\alpha-2}\sum_{j=1}^{N-1}\prod_{\ell=1}^{j}[\ln_{\ell}(x)]^{-2}\sum_{m=j+1}^{N}\prod_{p=j+1}^{m}[\ln_{p}(x)]^{-1}. 
    \lb{A.9}
\end{align}
\end{proof}

In connection with the limit circle case discussed in Theorem \ref{t3.3} we note the following result:

\begin{lemma} \lb{lA.2}
Let the conditions of Theorem \ref{t3.3} be satisfied, and $q_{\alpha,N,\varepsilon}(x)$, $\tau_{\alpha,N,\varepsilon}$,  $y_{N,\varepsilon}(x)$, and
$\wti y_{N,\varepsilon}(x)$ be as in \eqref{3.34}--\eqref{3.37}.  Then for all $N\in\bbN$,
\begin{align}
(\tau_{\alpha,N,\varepsilon}\,y_{N,\varepsilon})(x)=0,\quad (\tau_{\alpha,N,\varepsilon}\,\wti y_{N,\varepsilon})(x)=0.
	\lb{A.10}
\end{align}
\end{lemma}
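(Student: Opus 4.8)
The plan is to verify the two identities in \eqref{A.10} separately, treating $(\tau_{\alpha,N,\varepsilon}\,y_{N,\varepsilon})(x)=0$ as the substantive computation and deducing $(\tau_{\alpha,N,\varepsilon}\,\wti y_{N,\varepsilon})(x)=0$ from it by reduction of order. Since $\tau_{\alpha,N,\varepsilon}$ carries the weight $r\equiv1$, the first identity amounts to showing $y_{N,\varepsilon}^{-1}\big(x^{\alpha}y_{N,\varepsilon}'\big)'=q_{\alpha,N,\varepsilon}$, the exact analogue of \eqref{A.9}. The crucial observation is that $y_{N,\varepsilon}$ differs from $y_N$ only in the exponent of the last iterated logarithm, namely $y_{N,\varepsilon}(x)=y_N(x)[\ln_N(x)]^{-\varepsilon/2}$; equivalently, writing $P_k(x)=\prod_{\ell=1}^{k}[\ln_\ell(x)]^{-1}$, both functions have the common form $x^{-1/2}\prod_{k=1}^{N}[\ln_k(x)]^{-a_k}$, with $a_k=1/2$ for all $k$ in the case of $y_N$, and with $a_N=(1+\varepsilon)/2$ in the case of $y_{N,\varepsilon}$.

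First I would record, once and for all, the differentiation rules $(\ln_k)'=-x^{-1}P_{k-1}$ and $P_k'=x^{-1}P_k\sum_{j=1}^{k}P_j$ already implicit in \eqref{A.2}--\eqref{A.5}, and use them (via the logarithmic derivative $L=\ln y$) to obtain a single master formula, valid for arbitrary exponents $a_k$,
\begin{equation*}
\frac{1}{y}\big(x^{\alpha}y'\big)'=x^{\alpha-2}\Bigg[\Big(\tfrac34-\tfrac{\alpha}{2}\Big)+(\alpha-2)S+S^2+\sum_{k=1}^{N}a_kP_k\sum_{j=1}^{k}P_j\Bigg],\qquad S:=\sum_{k=1}^{N}a_kP_k.
\end{equation*}
Specializing $a_k\equiv1/2$ recovers \eqref{A.9}, i.e., reproves Lemma \ref{lA.1}, which I may then invoke. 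Next I would treat $y_{N,\varepsilon}$ as a perturbation of $y_N$: here $S=S_0+(\varepsilon/2)P_N$ with $S_0=\tfrac12\sum_{k=1}^{N}P_k$ the value attached to $y_N$. Subtracting the $y_N$ instance of the master formula from the $y_{N,\varepsilon}$ instance, and using $S_0=\tfrac12\sum_{j=1}^{N}P_j$ to combine the cross terms (specifically $\varepsilon P_NS_0+(\varepsilon/2)P_N\sum_jP_j=\varepsilon P_N\sum_jP_j$), should produce exactly the three correction terms that distinguish $q_{\alpha,N,\varepsilon}$ from $q_{\alpha,N}$ in \eqref{3.34}, namely a term proportional to $(2-\alpha)P_N$, a term proportional to $\varepsilon^2 P_N^2$, and the double-sum term $\varepsilon P_N\sum_jP_j$. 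The main obstacle is purely bookkeeping: keeping the iterated-product indices straight while expanding $S^2$ and the double sum $\sum_k a_kP_k\sum_{j\le k}P_j$, for which I would reuse the rearrangement identity \eqref{A.7} exactly as in the proof of Lemma \ref{lA.1}; no idea beyond \eqref{A.7} is required.

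Finally, for the second identity I would exploit that $\wti y_{N,\varepsilon}$ is the reduction-of-order companion of $y_{N,\varepsilon}$ in \eqref{3.36}. Writing $I(x)=\int_{x_0}^{x}dt\,t^{-\alpha}y_{N,\varepsilon}(t)^{-2}$, so that $\wti y_{N,\varepsilon}=y_{N,\varepsilon}I$ and $I'=x^{-\alpha}y_{N,\varepsilon}^{-2}$, a direct differentiation gives $\big(x^{\alpha}\wti y_{N,\varepsilon}'\big)'=\big(x^{\alpha}y_{N,\varepsilon}'\big)'I$, the two extra contributions cancelling by construction. Hence $\tau_{\alpha,N,\varepsilon}\wti y_{N,\varepsilon}=I\,(\tau_{\alpha,N,\varepsilon}y_{N,\varepsilon})=0$ as soon as the first identity is established, so this step is immediate and carries no real difficulty.
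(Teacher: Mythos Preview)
Your proposal is correct and follows essentially the same route as the paper: both use the factorization $y_{N,\varepsilon}=y_N[\ln_N(x)]^{-\varepsilon/2}$, invoke the $y_N$ computation \eqref{A.9}, and isolate the three $\varepsilon$-dependent correction terms that define $q_{\alpha,N,\varepsilon}$ in \eqref{3.34}; the second identity is dispatched by reduction of order in both. The only cosmetic difference is that the paper applies the product rule directly to $\big(x^{\alpha}y_{N,\varepsilon}'\big)'$ (computing $([\ln_N]^{-\varepsilon/2})'$, $([\ln_N]^{-\varepsilon/2})''$, and $y_N'/y_N$ separately, as in \eqref{A.11}--\eqref{A.15}), whereas you subsume this into a single ``master formula'' via the logarithmic derivative and general exponents $a_k$; your packaging is slightly cleaner but the underlying calculation is identical.
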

\begin{proof} Since $y_{N,\eps}(x)=y_N(x)[\ln_{N}(x)]^{-\varepsilon/2}$, one derives
\begin{align}\no
\big(x^{\al}y'_{N,\eps}(x)\big)'=&\big(x^{\al}y'_{N}(x)\big)' [\ln_{N}(x)]^{-\varepsilon/2}+2x^{\al}y'_{N}(x)\big([\ln_{N}(x)]^{-\varepsilon/2}\big)'
\\ &
+x^{\al}y_{N}(x)\big([\ln_{N}(x)]^{-\varepsilon/2}\big)''+\al x^{\al-1}y_{N}(x)\big([\ln_{N}(x)]^{-\varepsilon/2}\big)'.
	\lb{A.11}
\end{align}
Employing \eqref{A.2}, \eqref{A.4}, and \eqref{A.5}, we next calculate several derivatives, which appear on the right-hand side of \eqref{A.11}:
\begin{align}
& \lb{A.13}
\big([\ln_{N}(x)]^{-\varepsilon/2}\big)'= (\eps/2) x^{-1}[\ln_{N}(x)]^{-\varepsilon/2}\prod_{k=1}^{N}[\ln_{k}(x)]^{-1},
\\ &
\no
\big([\ln_{N}(x)]^{-\varepsilon/2}\big)''=- (\eps/2) x^{-2}[\ln_{N}(x)]^{-\varepsilon/2}\prod_{k=1}^{N}[\ln_{k}(x)]^{-1}
\\ \lb{A.14} &\qquad\qquad\qquad\qquad
+ \big(\eps^2\big/4\big) x^{-2} [\ln_{N}(x)]^{-\varepsilon/2}\prod_{k=1}^{N}[\ln_{k}(x)]^{-2}
\\ \no &\qquad\qquad\qquad\qquad
+ (\eps/2) x^{-2} [\ln_{N}(x)]^{-\varepsilon/2}\prod_{k=1}^{N}[\ln_{k}(x)]^{-1} 
\sum_{j=1}^{N}\prod_{\ell=1}^{j}[\ln_{\ell}(x)]^{-1},
\\ \lb{A.15} &
\f{y'_{N}(x)}{y_{N}(x)}= (2 x)^{-1}\Bigg[-1+ \sum_{j=1}^{N}\prod_{\ell=1}^{j}[\ln_{\ell}(x)]^{-1}\Bigg].
\end{align}

It follows from the last equality in \eqref{A.9}, from \eqref{A.11},  from \eqref{A.13}--\eqref{A.15}, and from \eqref{3.34} that
\begin{align}\no
& \f{1}{y_{N,\varepsilon}(x)}\big(x^{\alpha}y'_{N,\varepsilon}(x)\big)' 
= q_{\al,N}(x)- (\eps/2) x^{\al-2}\prod_{k=1}^{N}[\ln_{k}(x)]^{-1}
\Bigg(1-\sum_{j=1}^{N}\prod_{\ell=1}^{j}[\ln_{\ell}(x)]^{-1}\Bigg)   \no \\ 
& \qquad + (\eps/2) x^{\al-2} \Bigg(-\prod_{k=1}^{N}[\ln_{k}(x)]^{-1}
+ (\eps/2)\prod_{k=1}^{N}[\ln_{k}(x)]^{-2}     \no \\ 
& \qquad +\prod_{k=1}^{N}[\ln_{k}(x)]^{-1}\sum_{j=1}^{N}\prod_{\ell=1}^{j}[\ln_{\ell}(x)]^{-1}\Bigg) 
+ (\eps/2) \al x^{\al-2}\prod_{k=1}^{N}[\ln_{k}(x)]^{-1}.     \no \\
 & \quad = q_{\al,N}(x) - (\eps/2) (2 - \al) x^{\al-2}\prod_{k=1}^{N}[\ln_{k}(x)]^{-1}
+ \big(\eps^2\big/4\big) x^{\al-2}\prod_{k=1}^{N}[\ln_{k}(x)]^{-2}    \no \\
& \qquad +\eps x^{\al-2}\prod_{k=1}^{N}[\ln_{k}(x)]^{-1}\sum_{j=1}^{N}\prod_{\ell=1}^{j}[\ln_{\ell}(x)]^{-1}  \no \\ 
& \quad = q_{\al,N,\eps} (x).      \lb{A.16} 
\end{align}
Thus, the first equality in \eqref{A.10} is proved, and the second equality in \eqref{A.10}
is clear from the reduction of order approach. 
\end{proof}

\medskip

\noindent
{\bf Acknowledgments.} 
We gratefully acknowledge discussions with Tony Zettl. We also thank the referee for a very careful reading of our manuscript and for his constructive comments. 
F.G. is indebted to Alexander Sakhnovich for the great hospitality extended to him at the Faculty of Mathematics of the University of Vienna, Austria, during an extended stay in June of 2019. A.S. is very grateful for the kind hospitality he experienced at Baylor University in October of 2019. The research of A.S. was supported by the Austrian Science Fund (FWF) under Grant No.~P29177. 


\end{document}